\def\qed{\hfill$\Box$\vspace{12pt}}
\long\def\delete#1{}
\newcommand{\bmat}[1]{\begin{bmatrix}#1\end{bmatrix}}
\newcommand{\pmat}[1]{\begin{pmatrix}#1\end{pmatrix}}
\newcommand{\be}{\begin{equation}}
\newcommand{\ee}{\end{equation}}
\newcommand{\ben}{\begin{equation*}}
\newcommand{\een}{\end{equation*}}
\newcommand{\bea}{\begin{eqnarray}}
\newcommand{\eea}{\end{eqnarray}}
\newcommand{\bean}{\begin{eqnarray*}}
\newcommand{\eean}{\end{eqnarray*}}
\def\adj{{\rm adj}}
\def\diag{{\rm diag}}
\newtheorem{thm}{Theorem}[section]
\newtheorem{cor}[thm]{Corollary}
\newtheorem{prop}[thm]{Proposition}
\newtheorem{defn}[thm]{Definition}
\numberwithin{equation}{section}
\title{\textbf{Spectra of subdivision-vertex and subdivision-edge joins of graphs}\thanks{This work is supported by the Natural Science Foundation of China (No.11361033).}}
\author{Xiaogang Liu$^{1,2}$\, and\, Zuhe Zhang$^{2,}$\thanks{Corresponding Author. E-mail address: \tt xiaogliu.yzhang@gmail.com (X. Liu), zhang.zuhe@gmail.com (Z. Zhang)}\\
\footnotesize{1. Department of Applied Mathematics, Northwestern Polytechnical University, Xi'an, 710072, Shaanxi, PR China}\\
\footnotesize{2. School of Mathematics and Statistics, The University of Melbourne, Parkville, VIC 3010, Australia}}
\date{}
\begin{document}

\openup 0.5\jot
\maketitle

\begin{abstract}
The subdivision graph $\mathcal{S}(G)$ of a graph $G$ is the graph obtained by inserting a new vertex into every edge of $G$. Let $G_1$ and $G_2$ be two vertex disjoint graphs. The \emph{subdivision-vertex join} of $G_1$ and $G_2$, denoted by $G_1\dot{\vee}G_2$, is the graph obtained from $\mathcal{S}(G_1)$ and $G_2$ by joining every vertex of $V(G_1)$ with every vertex of $V(G_2)$. The \emph{subdivision-edge join} of $G_1$ and $G_2$, denoted by $G_1\underline{\vee}G_2$, is the graph obtained from $\mathcal{S}(G_1)$ and $G_2$ by joining every vertex of $I(G_1)$ with every vertex of $V(G_2)$, where $I(G_1)$ is the set of inserted vertices of $\mathcal{S}(G_1)$. In this paper we determine the adjacency spectra, the Laplacian spectra and the signless Laplacian spectra of $G_1\dot{\vee}G_2$ (respectively, $G_1\underline{\vee}G_2$) for a regular graph $G_1$ and an arbitrary graph $G_2$, in terms of the corresponding spectra of $G_1$ and $G_2$. As applications, these results enable us to construct infinitely many pairs of cospectral graphs. We also give the number of the spanning trees and the Kirchhoff index of $G_1\dot{\vee}G_2$ (respectively, $G_1\underline{\vee}G_2$) for a regular graph $G_1$ and an arbitrary graph $G_2$.

\bigskip

\noindent\textbf{Keywords:} Spectrum, Cospectral graphs, Subdivision-vertex join, Subdivision-edge join, Spanning tree, Kirchhoff index

\bigskip

\noindent{{\bf AMS Subject Classification (2010):}  05C50}
\end{abstract}

\section{Introduction}

All graphs considered in this paper are simple and undirected. Let $G=(V(G),E(G))$ be a graph with vertex set $V(G)=\{v_1,v_2,\ldots,v_n\}$ and edge set $E(G)$. The \emph{adjacency matrix} of $G$, denoted by $A(G)=(a_{ij})_{n\times n}$, is an $n\times n$
symmetric matrix such that $a_{ij}=1$ if vertices $v_i$ and $v_j$
are adjacent and $0$ otherwise. Let $d_{i}=d_G(v_i)$ be the degree of
vertex $v_i$ in $G$ and $D(G)=\diag(d_{1},d_{2},\ldots,d_{n})$ be
the diagonal matrix of vertex degrees. The \emph{Laplacian matrix} and \emph{signless Laplcacian matrix} of $G$ are defined as
$L(G)=D(G)-A(G)$ and $Q(G)=D(G)+A(G)$, respectively. Given an $n \times n$ matrix $M$, denote by
$$\phi(M;x)=\det(xI_n-M),$$ or simply $\phi(M)$, the characteristic polynomial of $M$, where $I_n$ is the identity matrix of size $n$. In particular, for a graph $G$, we called  $\phi(A(G))$ (respectively, $\phi(L(G))$,  $\phi(Q(G))$) the \emph{adjacency} (respectively, \emph{Laplacian, signless Laplacian}) \emph{characteristic polynomial} of $G$ and its roots the \emph{adjacency} (respectively, \emph{Laplacian}, \emph{singless Laplacian}) eigenvalues of $G$.  The adjacency eigenvalues of $G$, denoted by $\lambda_1(G)\geq\lambda_2(G)\geq\cdots\geq\lambda_n(G)$, are called the \emph{$A$-spectrum} of $G$. Similarly, the eigenvalues of $L(G)$ and $Q(G)$, denoted by $0=\mu_1(G)\leq\mu_2(G)\leq\cdots\leq\mu_n(G)$ and  $\nu_1(G)\leq\nu_2(G)\leq\cdots\leq\nu_n(G)$ respectively, are called the \emph{$L$-spectrum} and \emph{$Q$-spectrum} of $G$ accordingly. Two graphs are said to be \emph{$A$-cospectral} (respectively, \emph{$L$-cospectral, $Q$-cospectral}) if they have the same $A$-spectrum (respectively, \emph{$L$-spectrum}, \emph{$Q$-spectrum}). It is well known that graph spectra store a lot of structural information about a graph. See \cite{kn:Cvetkovic95,kn:Cvetkovic10,kn:Brouwer12} and the references therein to know more.

Calculating the spectra of graphs as well as formulating the characteristic polynomials of graphs is
a fundamental and very meaningful work in spectral graph theory. Up till now, several graph operations such as the disjoint union, the Cartesian product, the Kronecker product, the corona, the edge corona, the neighborhood corona etc. have been introduced, and their spectra were computed in \cite{kn:Barik07,kn:Barik08,kn:Brouwer12,kn:Cui12,kn:Cvetkovic95,kn:Cvetkovic10,kn:Gopalapillai11,kn:Hou10,kn:McLeman11,kn:Nath14,kn:Wang12,kn:LiuZhou13,kn:LiuLu13,kn:Lan14}, respectively. Since cospectral
graphs have the same characteristic polynomials, graph operations whose corresponding spectra are known can be used to construct infinitely many pairs of cospectral graphs\cite{kn:McLeman11,kn:Cui12,kn:Wang12,kn:LiuZhou13,kn:LiuLu13,kn:Lan14}. On the other hand, comparing the exponents
and coefficients of two characteristic polynomials is a frequently-used and efficient method for determining the non-cospectrality of two graphs \cite{kn:Haemers08,kn:LiuWang11,kn:Liu14,kn:ZhangLiu09}. Moreover, the spectra of graphs as well as the characteristic polynomials of graphs can help us to investigate many other properties of graphs, such as the \emph{energy}\cite{kn:Ilic09,kn:Ilic11,kn:Li12,kn:LiuZhou12}, the number of \emph{spanning trees}\cite{kn:Atajan06,kn:Ozeki11,kn:ZhangYong00,kn:ZhangYong05}, the \emph{Kirchhoff index}\cite{kn:Gao12,kn:WangWZ13,kn:Xu03,kn:ZhangHP09} and so on.

The \emph{subdivision graph}\cite{kn:Cvetkovic10} $\mathcal{S}(G)$ of a graph $G$ is the graph obtained by inserting a new vertex into every edge of $G$. We denote the set of such new vertices by $I(G)$.  In \cite{kn:Indulal12}, the following graph operations based on subdivision graphs were introduced.

\begin{defn}
\label{defn1}
{\em The \emph{subdivision-vertex join} of two vertex disjoint graphs $G_1$ and $G_2$, denoted by $G_1\dot{\vee}G_2$, is the graph obtained from $\mathcal{S}(G_1)$ and $G_2$ by joining each vertex of $V(G_1)$ with every vertex of $V(G_2)$.}
\end{defn}

\begin{defn}
\label{defn2}
{\em The \emph{subdivision-edge join} of two vertex disjoint graphs $G_1$ and $G_2$, denoted by $G_1\underline{\vee}G_2$, is the graph obtained from $\mathcal{S}(G_1)$ and $G_2$ by joining each vertex of $I(G_1)$ with every vertex of $V(G_2)$.}
\end{defn}

In  \cite{kn:Indulal12}, the $A$-spectra of $G_1\dot{\vee}G_2$ (respectively, $G_1\underline{\vee}G_2$), when $G_1$ and $G_2$ are both regular graphs, were computed in terms of the $A$-spectra of $G_1$ and $G_2$. As an application, the author constructed infinite family of new integral graphs (graphs with their spectra only integers).

In this paper, we determine the $A$-spectra of $G_1\dot{\vee} G_2$ (respectively, $G_1\underline{\vee} G_2$) for a regular graph $G_1$ and an arbitrary graph $G_2$ in terms of that of $G_1$ and $G_2$ (see Theorems \ref{SVAthm1} and \ref{SEAthm1}); this generalises \cite[Theorems 1.1 and 1.2]{kn:Indulal12}. We also determine the $L$-spectra and the $Q$-spectra of $G_1\dot{\vee} G_2$ (respectively, $G_1\underline{\vee} G_2$) for a regular graph $G_1$ and an arbitrary graph $G_2$ (see Theorems \ref{SVLthm1}, \ref{SVQthm1}, \ref{SELthm1} and \ref{SEQthm1}). As applications, our results on the spectra of $G_1\dot{\vee} G_2$ and $G_1\underline{\vee} G_2$ enable us to construct infinitely many pairs of cospectral graphs and help us to obtain the number of spanning trees and the Kirchhoff index of $G_1\dot{\vee}G_2$ and $G_1\underline{\vee}G_2$, respectively.

\section{Spectra of subdivision-vertex joins}\label{SV:spec}

Let $G$ be a graph on $n$ vertices and $m$ edges. The \emph{incidence matrix} $R(G)$ of $G$ is the $n\times m$ $(0,1)$-matrix $(b_{ij})$ such that $b_{ij}=1$ if and only if the vertex $v_i$ and edge $e_j$ are incident in $G$. The \emph{line graph} $\mathcal{L}(G)$ of a graph $G$ is the graph with vertices the edges of $G$ such that two vertices are adjacent if and only if the corresponding edges have a common end-vertex. It is well known \cite{kn:Cvetkovic10} that
\begin{equation}\label{RTRNN}
R(G)^TR(G)=A(\mathcal {L}(G))+2I_{m},
\end{equation}
where $R(G)^T$ means the transpose of $R(G)$. In particular, if $G$ is an $r$-regular graph, then
\begin{equation}\label{RRINie}
 R(G)R(G)^T=A(G)+rI_{n}.
\end{equation}

The \emph{$M$-coronal} $\Gamma_M(x)$ of an $n\times n$ matrix $M$ is defined \cite{kn:McLeman11,kn:Cui12} to be the sum of the entries of the matrix $(x I_n-M)^{-1}$, that is,
$$\Gamma_M(x)=\mathbf{1}_n^T(x I_n-M)^{-1}\mathbf{1}_n,$$
where $\mathbf{1}_n$ denotes the column vector of dimension $n$ with all the components equal to one. It is known \cite[Proposition 2]{kn:Cui12} that, if $M$ is an $n \times n$ matrix with each row sum equal to a constant $t$, then
\be
\label{eq:GammaT}
\Gamma_{M}(x) = n/(x-t).
\ee
In particular, since for any graph $G$ with $n$ vertices, each row sum of $L(G)$ is equal to $0$, we have
\be
\label{eq:GammaTL}
\Gamma_{L(G)}(x) = n/x.
\ee

Before proceeding to present the main results of this section, we need to state some results which will be used frequently later.

\begin{prop}\emph{\cite{kn:Schur}}\label{schur1212}
Let $M_1$, $M_2$, $M_3$ and $M_4$ be respectively $p\times p$, $p\times q$, $q\times p$ and $q\times q$ matrices with $M_1$ and $M_4$ invertible. Then
\begin{eqnarray*}
  \det\pmat{M_1 & M_2\\
            M_3 & M_4} &=& \det(M_4)\cdot\det\left(M_1-M_2M_4^{-1}M_3\right), \label{schur1}   \\
   &=& \det(M_1)\cdot\det\left(M_4-M_3M_1^{-1}M_2\right) \label{schur2},
\end{eqnarray*}
where $M_1-M_2M_4^{-1}M_3$ and $M_4-M_3M_1^{-1}M_2$ are called the Schur complements of $M_4$ and $M_1$, respectively.
\end{prop}

\begin{prop}\label{DETEXP1}
Let $A$ be an $n\times n$ real matrix, and $J_{s\times t}$ denote the $s\times t$ matrix with all entries equal to one. Then
\[\det(A+\alpha J_{n\times n})=\det(A)+\alpha\mathbf{1}_n^T\adj(A)\mathbf{1}_n,\]
where $\alpha$ is an real number and $\adj(A)$ is the adjugate matrix of $A$.
\end{prop}

\begin{proof}
Let $A=\left(\begin{array}{cccc}
               \mathbf{a}_1 \\
               \mathbf{a}_2 \\
               \vdots        \\
               \mathbf{a}_n
             \end{array}
\right)$, where $\mathbf{a}_i=\left(a_{i,1},a_{i,2},\ldots,a_{i,n}\right)$ and $a_{i,j}$ is $(i,j)$-entry of $A$. Then
\begin{eqnarray*}
\det(A+\alpha J_{n\times n})&=& \det\left(\begin{array}{c}
               \mathbf{a}_1 \\
               \left(\begin{array}{c}
               \mathbf{a}_2 \\
               \vdots        \\
               \mathbf{a}_n
             \end{array}\right)+\alpha J_{(n-1)\times n}\end{array}\right)+\det\left(\begin{array}{c}
               \alpha\mathbf{1}_n^T \\
               \left(\begin{array}{c}
               \mathbf{a}_2 \\
               \vdots        \\
               \mathbf{a}_n
             \end{array}\right)+\alpha J_{(n-1)\times n}\end{array}\right)\\
&=& \det\left(\begin{array}{c}
               \mathbf{a}_1 \\
               \left(\begin{array}{c}
               \mathbf{a}_2 \\
               \vdots        \\
               \mathbf{a}_n
             \end{array}\right)+\alpha J_{(n-1)\times n}\end{array}\right)+\det\left(\begin{array}{c}
               \alpha\mathbf{1}_n^T \\
               \mathbf{a}_2 \\
               \vdots        \\
               \mathbf{a}_n
             \end{array}\right)\\
&=& \det\left(\begin{array}{c}
               \mathbf{a}_1 \\
               \left(\begin{array}{c}
               \mathbf{a}_2 \\
               \vdots        \\
               \mathbf{a}_n
             \end{array}\right)+\alpha J_{(n-1)\times n}\end{array}\right)+\alpha\sum_{i=1}^nC_{1,i} \\
&=& \det\left(\begin{array}{c}
               \mathbf{a}_1 \\
               \mathbf{a}_2 \\
               \left(\begin{array}{c}
               \mathbf{a}_3 \\
               \vdots        \\
               \mathbf{a}_n
             \end{array}\right)+\alpha J_{(n-2)\times n}\end{array}\right)+\det\left(\begin{array}{c}
               \mathbf{a}_1  \\
               \alpha\mathbf{1}_n^T \\
               \left(\begin{array}{c}
               \mathbf{a}_3 \\
               \vdots        \\
               \mathbf{a}_n
             \end{array}\right)+\alpha J_{(n-2)\times n}\end{array}\right)+\alpha\sum_{j=1}^nC_{1,j}\\
&=& \det\left(\begin{array}{c}
               \mathbf{a}_1 \\
               \mathbf{a}_2 \\
               \left(\begin{array}{c}
               \mathbf{a}_3 \\
               \vdots        \\
               \mathbf{a}_n
             \end{array}\right)+\alpha J_{(n-2)\times n}\end{array}\right)+\det\left(\begin{array}{c}
               \mathbf{a}_1 \\
               \alpha\mathbf{1}_n^T \\
               \mathbf{a}_3   \\
               \vdots        \\
               \mathbf{a}_n
             \end{array}\right)+\alpha\sum_{j=1}^nC_{1,j}\\
&=& \det\left(\begin{array}{c}
               \mathbf{a}_1 \\
               \mathbf{a}_2 \\
               \left(\begin{array}{c}
               \mathbf{a}_3 \\
               \vdots        \\
               \mathbf{a}_n
             \end{array}\right)+\alpha J_{(n-2)\times n}\end{array}\right)+\alpha\sum_{j=1}^nC_{2,j}+\alpha\sum_{j=1}^nC_{1,j}\\
&=& \cdots \\
&=&\det(A)+\alpha\sum_{i=1}^n\sum_{j=1}^nC_{i,j}\\
&=&\det(A)+\alpha\mathbf{1}_n^T\adj(A)\mathbf{1}_n,
\end{eqnarray*}
where $C_{i,j}$ is the cofactor of the $(i,j)$-entry of $\det(A)$.
\qed\end{proof}

\begin{cor}\label{JnnExpand}
Let $A$ be an $n\times n$ real matrix. Then
\[\det(xI_n-A-\alpha J_{n\times n})=\left(1-\alpha\Gamma_A(x)\right)\det(xI_n-A).\]
\end{cor}
\begin{proof}
Note that $\adj(xI_n-A)=\det(xI_n-A)\cdot(xI_n-A)^{-1}$. Then by Proposition \ref{DETEXP1}, we have
\begin{eqnarray*}
  \det(xI_n-A-\alpha J_{n\times n})&=&\det(xI_n-A)-\alpha\mathbf{1}_n^T\adj(xI_n-A)\mathbf{1}_n \\
&=& \det(xI_n-A)\left(1-\alpha\mathbf{1}_n^T(xI_n-A)^{-1}\mathbf{1}_n\right) \\
&=&\left(1-\alpha\Gamma_A(x)\right)\det(xI_n-A).
\end{eqnarray*}
The required result is obtained.
\qed\end{proof}

\subsection{$A$-spectra of subdivision-vertex joins}
\label{sec:SVA}

\begin{thm}\label{SVAthm1}
Let $G_1$ be an $r_1$-regular graph on $n_1$ vertices and $m_1$ edges, and $G_2$ an arbitrary graph on $n_2$ vertices. Then
\begin{eqnarray*}
\phi\left(A(G_1\dot{\vee} G_2);x\right)=\phi(A(G_2);x)\cdot x^{m_1-n_1}\cdot\Big(x^2-n_1 x\Gamma_{A(G_2)}(x)-2r_1\Big)\cdot\prod_{i=2}^{n_1}\Big(x^2-r_1-\lambda_i(G_1)\Big).
\end{eqnarray*}
\end{thm}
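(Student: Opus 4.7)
The plan is to compute the characteristic polynomial directly by writing $xI - A(G_1\dot{\vee}G_2)$ in $3\times 3$ block form (with blocks indexed by $V(G_1)$, $I(G_1)$, $V(G_2)$) and performing two successive Schur-complement reductions, exploiting the $r_1$-regularity of $G_1$ to diagonalise everything at the end.

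With the vertex ordering above, the incidence structure of $\mathcal{S}(G_1)$ gives
\[
xI - A(G_1\dot{\vee}G_2) \;=\;
\begin{pmatrix} xI_{n_1} & -R(G_1) & -\mathbf{1}_{n_1}\mathbf{1}_{n_2}^T \\
-R(G_1)^T & xI_{m_1} & 0 \\
-\mathbf{1}_{n_2}\mathbf{1}_{n_1}^T & 0 & xI_{n_2}-A(G_2) \end{pmatrix}.
\]
First I would take the Schur complement with respect to the middle block $xI_{m_1}$ (invertible for $x\neq 0$, which suffices since the final identity is polynomial). This contributes the factor $x^{m_1}$ and replaces the top-left block by $xI_{n_1}-\tfrac{1}{x}R(G_1)R(G_1)^T$. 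Using \eqref{RRINie}, this becomes $\tfrac{1}{x}\bigl(x^2 I_{n_1} - r_1 I_{n_1} - A(G_1)\bigr)$, while the off-diagonal $J$-blocks are unchanged because $R(G_1)$ is absent from the third block row/column.

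Second, I would take the Schur complement with respect to the $G_2$-block $xI_{n_2}-A(G_2)$, contributing $\phi(A(G_2);x)$. The key observation is that $\mathbf{1}_{n_1}\mathbf{1}_{n_2}^T \bigl(xI_{n_2}-A(G_2)\bigr)^{-1}\mathbf{1}_{n_2}\mathbf{1}_{n_1}^T = \Gamma_{A(G_2)}(x)\,J_{n_1}$ by the very definition of the coronal. So after the two reductions the remaining determinant equals
\[
\det\!\Bigl(\tfrac{1}{x}\bigl(x^2 I_{n_1} - r_1 I_{n_1} - A(G_1)\bigr) \;-\; \Gamma_{A(G_2)}(x)\,J_{n_1}\Bigr).
\]

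Finally I would exploit that $G_1$ is $r_1$-regular: $\mathbf{1}_{n_1}$ is an eigenvector of $A(G_1)$ with eigenvalue $r_1$, and there is an orthonormal basis extending $\mathbf{1}_{n_1}/\sqrt{n_1}$ that simultaneously diagonalises $A(G_1)$ and $J_{n_1}$ (since $J_{n_1}$ has the single nonzero eigenvalue $n_1$ on $\mathbf{1}_{n_1}$ and is zero on its orthogonal complement). In this basis, the matrix inside the determinant is diagonal; the $\mathbf{1}_{n_1}$-direction yields the factor $\tfrac{1}{x}(x^2-2r_1) - n_1\Gamma_{A(G_2)}(x)$, and for $i=2,\ldots,n_1$ each remaining direction yields $\tfrac{1}{x}\bigl(x^2-r_1-\lambda_i(G_1)\bigr)$. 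Multiplying by the accumulated $x^{m_1}\phi(A(G_2);x)$ and collecting the $1/x$ factors into $x^{m_1-n_1}$ yields the stated formula.

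The only real obstacle is the bookkeeping of the $1/x$ factors produced by the two Schur complements and the need to justify dividing by $xI_{m_1}$ and by $xI_{n_2}-A(G_2)$; the former is handled by restricting to $x\neq 0$ and extending the resulting polynomial identity by continuity, and the latter by noting that both sides of the claimed identity are polynomials in $x$ (after clearing denominators via $\phi(A(G_2);x)\Gamma_{A(G_2)}(x)$, which is a polynomial). Everything else is a routine application of the identities \eqref{RRINie} and the coronal formulas from the start of the section.
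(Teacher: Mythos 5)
Your proposal is correct and follows essentially the same route as the paper: the same $3\times 3$ block decomposition, Schur complementation against both the $G_2$-block and the $xI_{m_1}$-block (merely performed in the opposite order), and the coronal $\Gamma_{A(G_2)}(x)$ to absorb the all-ones blocks. The only cosmetic difference is the final step, where you evaluate the remaining $n_1\times n_1$ determinant by simultaneously diagonalising $A(G_1)$ and $J_{n_1}$, whereas the paper applies the adjugate/matrix-determinant identity together with the coronal of $\tfrac{1}{x}RR^T$; the two uses of regularity are equivalent.
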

\begin{proof}
Let $R$ be the incidence matrix of $G_1$. Then, with a proper labeling of vertices, the adjacency matrix of $G_1\dot{\vee} G_2$ can be written as
\[A(G_1\dot{\vee} G_2)=\bmat{
                          0_{n_1\times n_1} & R & J_{n_1\times n_2} \\[0.2cm]
                          R^T & 0_{m_1\times m_1} & 0_{m_1\times n_2}\\[0.2cm]
                          J_{n_2\times n_1} & 0_{n_2\times m_1} & A(G_2)
                       },\]
where $J_{s\times t}$ denotes the $s\times t$ matrix with all entries equal to one, and $0_{s\times t}$ denotes the $s\times t$ matrix with all entries equal to zero. Then the adjacency characteristic polynomial of $G_1\dot{\vee} G_2$ is given by
\begin{eqnarray*}
\phi\left(A(G_1\dot{\vee} G_2);x\right)
&=& \det\bmat{
                          xI_{n_1} & -R & -J_{n_1\times n_2} \\[0.2cm]
                          -R^T & xI_{m_1} & 0_{m_1\times n_2}\\[0.2cm]
                          -J_{n_2\times n_1} & 0_{n_2\times m_1} & xI_{n_2}-A(G_2)
                       }\\ [0.2cm]
&=& \det(xI_{n_2}-A(G_2))\cdot\det(S)\\ [0.2cm]
&=& \phi(A(G_2))\cdot\det(S),
\end{eqnarray*}
where
\begin{eqnarray*}
S&=&\pmat{
          xI_{n_1} & -R \\[0.2cm]
         -R^T & xI_{m_1}
       }-\pmat{J_{n_1\times n_2}\\[0.2cm]
                0_{m_1\times n_2}}(xI_{n_2}-A(G_2))^{-1}\pmat{J_{n_2\times n_1} & 0_{n_2\times m_1}}\\ [0.2cm]
&=&\pmat{
          xI_{n_1}-\Gamma_{A(G_2)}(x)J_{n_1\times n_1} & -R \\[0.2cm]
         -R^T & xI_{m_1}
       }
\end{eqnarray*}
is the Schur complement of $xI_{n_2}-A(G_2)$ obtained by Proposition \ref{schur1212}. By Proposition \ref{schur1212} and Corollary \ref{JnnExpand}, we have
\begin{eqnarray*}
\det (S)&=&x^{m_1}\cdot\det\left(xI_{n_1}-\Gamma_{A(G_2)}(x)J_{n_1\times n_1}-\frac{1}{x}RR^T\right) \\
&=& x^{m_1}\cdot\left(1-\Gamma_{A(G_2)}(x)\cdot\Gamma_{\frac{1}{x}RR^T}(x)\right)\cdot\det\left(xI_{n_1}-\frac{1}{x}RR^T\right).
\end{eqnarray*}
By (\ref{RRINie}) and (\ref{eq:GammaT}), we have
\begin{equation}\label{RRTxGa111}
 \Gamma_{\frac{1}{x}RR^T}(x)=\frac{n_1}{x-\frac{2r_1}{x}}=\frac{n_1x}{x^2-2r_1}.
\end{equation}
By plugging (\ref{RRTxGa111}) into $\det (S)$, and then applying the fact that $f(\lambda)$ is an eigenvalue of $f(A)$ if $\lambda$ is an eigenvalue of a matrix $A$ and $f(A)$ is a polynomial of $A$, we have
\begin{eqnarray*}
\det (S)&=& x^{m_1}\cdot\left(1-\frac{n_1x}{x^2-2r_1}\cdot\Gamma_{A(G_2)}(x)\right)\cdot\prod_{i=1}^{n_1}\left(x-\frac{r_1}{x}-\frac{1}{x}\lambda_i(G_1)\right) \\
&=& x^{m_1-n_1}\cdot\Big(x^2-2r_1-n_1x\cdot\Gamma_{A(G_2)}(x)\Big)\cdot\prod_{i=2}^{n_1}\Big(x^2-r_1-\lambda_i(G_1)\Big).
\end{eqnarray*}
Here in the last step we used the fact that $\lambda_1(G_1)=r_1$ given $G_1$ is an $r_1$-regular graph. Then the required result follows from $\phi\left(A(G_1\dot{\vee} G_2);x\right)=\phi(A(G_2))\cdot\det(S)$.  \qed\end{proof}

Theorem \ref{SVAthm1} enables us to compute the $A$-spectra of many subdivision-vertex joins. In general, if we can determine the $A(G_2)$-coronal $\Gamma_{A(G_2)}(x)$, then we are able to compute the $A$-spectrum of $G_1\dot{\vee} G_2$. Let $K_{p,q}$ denote the complete bipartite graph with $p, q \ge 1$ vertices in the two parts of its bipartition. Then \cite{kn:McLeman11} the $A(K_{p,q})$-coronal of $K_{p,q}$ is given by
\begin{equation}\label{CornCPL}
\Gamma_{A(K_{p,q})}(x) = ((p+q)x+2pq)/(x^2-pq).
\end{equation}
Note \cite{kn:McLeman11,kn:Cui12} that if $G$ is an $r$-regular graph on $n$ vertices, then the $A(G)$-coronal of $G$ is given by
\begin{equation}\label{CornRRL}
\Gamma_{A(G)}(x)=n/(x-r).
\end{equation}
By substituting (\ref{CornCPL}) and (\ref{CornRRL}) back into Theorem \ref{SVAthm1},  we have the following corollaries. The computations are routine, and hence we omit the details.

\begin{cor}\label{SVAComplete}
Let $G$ be an $r$-regular graph on $n$ vertices and $m$ edges with $m\ge n$. Then the $A$-spectrum of $G  \dot{\vee} K_{p,q}$ consists of:
\begin{itemize}
  \item[\rm (a)] $0$, repeated $m-n+p+q-2$ times;
  \item[\rm (b)] $\pm\sqrt{r+\lambda_i(G)}$, for $i=2,3,\ldots,n$;
  \item[\rm (c)] four roots of the equation $x^4-\big(pq+n(p+q)+2r\big)x^2-2npqx+2pqr=0.$
\end{itemize}
\end{cor}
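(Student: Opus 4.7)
The plan is to apply Theorem \ref{SVAthm1} directly with $G_1 = G$ and $G_2 = K_{p,q}$, substitute the characteristic polynomial and coronal of $K_{p,q}$, and read off each factor of the resulting expression. With the notation of the theorem one takes $r_1 = r$, $n_1 = n$, $m_1 = m$.

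First I would recall that the $A$-spectrum of $K_{p,q}$ is $\{\sqrt{pq},\; 0^{(p+q-2)},\; -\sqrt{pq}\}$, so
\[
\phi(A(K_{p,q});x) \;=\; x^{p+q-2}(x^2-pq),
\]
and use the coronal $\Gamma_{A(K_{p,q})}(x) = ((p+q)x+2pq)/(x^2-pq)$ from (\ref{CornCPL}). Plugging these into Theorem \ref{SVAthm1} yields a formula whose only nontrivial denominator comes from the coronal, and this denominator is precisely the factor $x^2 - pq$ already carried by $\phi(A(K_{p,q});x)$, so it will cancel cleanly.

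Next I would clear that denominator by grouping $\phi(A(K_{p,q});x)$ with the middle factor $x^2 - 2r - n x\,\Gamma_{A(K_{p,q})}(x)$ of Theorem \ref{SVAthm1}. A routine expansion gives
\[
(x^2-pq)(x^2-2r) \;-\; n x\bigl((p+q)x+2pq\bigr) \;=\; x^4 - \bigl(pq + n(p+q) + 2r\bigr)x^2 - 2npq\,x + 2pqr,
\]
which is exactly the quartic in item (c). The factor $x^{p+q-2}$ combines with the $x^{m-n}$ supplied by Theorem \ref{SVAthm1} to give $x^{m-n+p+q-2}$; the assumption $m \ge n$ and $p,q \ge 1$ make this exponent nonnegative, producing the zero eigenvalues in (a). The remaining factor $\prod_{i=2}^{n}(x^2-r-\lambda_i(G))$ supplies the eigenvalues $\pm\sqrt{r+\lambda_i(G)}$ in (b).

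I do not foresee any real obstacle: the whole argument is substitution followed by polynomial simplification. The only bookkeeping care required is to observe that no extra $0$ is hidden inside the quartic---its constant term $2pqr$ is nonzero since $m \ge n$ forces $r \ge 2$ (using $m = rn/2$)---so the multiplicities in (a), (b), (c) add to $(m-n+p+q-2) + 2(n-1) + 4 = n + m + p + q$, matching the order of $G \dot{\vee} K_{p,q}$ and confirming that every eigenvalue has been accounted for.
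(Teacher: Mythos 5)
Your proof is correct and takes essentially the same route the paper intends: substituting $\phi(A(K_{p,q});x)=x^{p+q-2}(x^2-pq)$ and the coronal from (\ref{CornCPL}) into Theorem \ref{SVAthm1}, cancelling the denominator $x^2-pq$ against that factor to obtain the quartic in (c), and reading off (a) and (b) from the remaining factors. The paper explicitly omits these ``routine'' computations, and your expansion and multiplicity count supply them accurately.
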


\begin{cor}\label{SVACor1}
\emph{\cite[Theorem 1.1]{kn:Indulal12}}
Let $G_1$ be an $r_1$-regular graphs on $n_1$ vertices and $m_1$ edges, and $G_2$ an $r_2$-regular graphs on $n_2$ vertices. Then the $A$-spectrum of $G_1\dot{\vee} G_2$ consists of:
\begin{itemize}
  \item[\rm (a)] $\lambda_i(G_2)$, for $i=2,3,\ldots,n_2$;
  \item[\rm (b)] $0$, repeated $m_1-n_1$ times;
  \item[\rm (c)] $\pm\sqrt{r_1+\lambda_j(G_1)}$, for $j=2,3,\ldots,n_1$;
  \item[\rm (d)] three roots of the equation $x^3-r_2x^2-(n_1n_2+2r_1)x+2r_1r_2=0.$
\end{itemize}
\end{cor}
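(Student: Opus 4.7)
The plan is to specialize Theorem \ref{SVAthm1} by substituting the explicit coronal formula (\ref{CornRRL}) for the regular graph $G_2$, namely $\Gamma_{A(G_2)}(x)=n_2/(x-r_2)$. Inserting this gives
\[
\phi\bigl(A(G_1\dot{\vee} G_2);x\bigr)
=\phi(A(G_2);x)\cdot x^{m_1-n_1}\cdot\left(x^2-\frac{n_1 n_2\,x}{x-r_2}-2r_1\right)\cdot\prod_{i=2}^{n_1}\bigl(x^2-r_1-\lambda_i(G_1)\bigr).
\]

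The key observation is that $(x-r_2)$ appears both as a denominator in the third factor and (because $G_2$ is $r_2$-regular with $\lambda_1(G_2)=r_2$) as a factor of $\phi(A(G_2);x)=(x-r_2)\prod_{i=2}^{n_2}(x-\lambda_i(G_2))$. I would cancel these against each other, rewriting the third factor as
\[
\frac{(x^2-2r_1)(x-r_2)-n_1 n_2\, x}{x-r_2}=\frac{x^3-r_2 x^2-(n_1 n_2+2r_1)x+2r_1 r_2}{x-r_2},
\]
so that after cancellation the whole characteristic polynomial is a genuine polynomial equal to
\[
\Bigl(\prod_{i=2}^{n_2}(x-\lambda_i(G_2))\Bigr)\cdot x^{m_1-n_1}\cdot\bigl(x^3-r_2 x^2-(n_1 n_2+2r_1)x+2r_1 r_2\bigr)\cdot\prod_{i=2}^{n_1}\bigl(x^2-r_1-\lambda_i(G_1)\bigr).
\]

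From this factorization I would read off the four contributions to the spectrum: the roots $\lambda_i(G_2)$ for $i=2,\ldots,n_2$ from the first factor give (a); the factor $x^{m_1-n_1}$ contributes $0$ with multiplicity $m_1-n_1$, which is (b); the factors $x^2-r_1-\lambda_i(G_1)$ for $i=2,\ldots,n_1$ contribute the pairs $\pm\sqrt{r_1+\lambda_i(G_1)}$, yielding (c); and the three roots of the explicit cubic $x^3-r_2 x^2-(n_1 n_2+2r_1)x+2r_1 r_2=0$ give (d).

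This is essentially a routine calculation with no serious obstacle; the only point that requires care is the algebraic cancellation of the $(x-r_2)$ factor between $\phi(A(G_2);x)$ and the denominator coming from $\Gamma_{A(G_2)}(x)$, which must be carried out before reading off eigenvalues so that no spurious pole or zero is introduced and so that the total degree matches $n_1+m_1+n_2$.
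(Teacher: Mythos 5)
Your proposal is correct and follows exactly the route the paper intends: the paper derives Corollary \ref{SVACor1} by substituting the coronal formula (\ref{CornRRL}) into Theorem \ref{SVAthm1} and omits the "routine" computation, which is precisely the cancellation of the $(x-r_2)$ factor that you carry out explicitly (and your degree check confirms the factorization is complete).
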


Up till now, many infinite families of pairs of $A$-cospectral graphs are generated by using graph operations (for example, \cite{kn:Barik07,kn:LiuLu13,kn:LiuZhou13,kn:McLeman11}). Here, we use the subdivision-vertex join to construct infinitely many pairs of $A$-cospectral graphs, as stated in the following corollary of Theorem \ref{SVAthm1}.

\begin{cor}
\label{cor:SVAcosp}
\begin{itemize}
  \item[\rm (a)]  If $G_1$ and $G_2$ are $A$-cospectral regular graphs, and $H$ is any graph, then $G_1\dot{\vee} H$ and $G_2\dot{\vee} H$ are $A$-cospectral.
  \item[\rm (b)]  If $G$ is a regular graph, and $H_1$ and $H_2$ are $A$-cospectral graphs with $\Gamma_{A(H_1)}(x)=\Gamma_{A(H_2)}(x)$, then $G\dot{\vee} H_1$ and $G\dot{\vee} H_2$ are $A$-cospectral.
\end{itemize}
\end{cor}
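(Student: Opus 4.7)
Both parts are direct consequences of the formula in Theorem \ref{SVAthm1}, so the plan is to show that all ingredients on the right-hand side of that formula agree for the two pairs in question.

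For part (a), I first want to observe that two $A$-cospectral regular graphs necessarily share the triple $(n,m,r)$: cospectrality forces the same number of vertices $n_1$ (the number of eigenvalues) and, because for a regular graph the regularity equals the largest adjacency eigenvalue, the same $r_1$; then $m_1=n_1 r_1/2$ coincides as well. Writing out $\phi(A(G_1\dot\vee H);x)$ and $\phi(A(G_2\dot\vee H);x)$ by Theorem \ref{SVAthm1}, every factor $\phi(A(H);x)$, $x^{m_1-n_1}$, $x^2-n_1 x\,\Gamma_{A(H)}(x)-2r_1$ is literally the same, and the products $\prod_{i=2}^{n_1}(x^2-r_1-\lambda_i(G_j))$ coincide because the eigenvalue lists of $G_1$ and $G_2$ are equal.

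For part (b), I would keep $G$ fixed and compare $\phi(A(G\dot\vee H_1);x)$ with $\phi(A(G\dot\vee H_2);x)$ via Theorem \ref{SVAthm1}. On the right-hand side the only quantities that depend on the second summand are $\phi(A(H_i);x)$ and $\Gamma_{A(H_i)}(x)$; by hypothesis both pairs agree, so the two characteristic polynomials are identical.

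There is no real obstacle here; the only point that requires a short justification is the claim that $A$-cospectral regular graphs share $(n,m,r)$, which I will record explicitly to make the application of Theorem \ref{SVAthm1} unambiguous. I would also remark that part (b) is not vacuous: equation \eqref{CornRRL} shows that for regular graphs $\Gamma_{A(H)}(x)=n/(x-r)$ is determined by the $A$-spectrum, so any pair of cospectral regular graphs automatically satisfies the hypothesis of (b); for irregular $H_1,H_2$ the coronal condition is a genuine extra requirement that must be imposed.
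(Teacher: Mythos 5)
Your proposal is correct and is exactly the argument the paper intends: the corollary is stated as an immediate consequence of Theorem \ref{SVAthm1} (the paper omits the details), and your factor-by-factor comparison, together with the explicit observation that $A$-cospectral regular graphs share $(n_1,m_1,r_1)$, is the routine verification being left to the reader. Your closing remark that the coronal hypothesis in (b) is automatic for regular $H_i$ but a genuine extra condition otherwise matches the paper's own comment following the corollary.
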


Note that the condition $\Gamma_{A(H_1)}(x)=\Gamma_{A(H_2)}(x)$ in (b) is not redundant since $A$-cospectral graphs may have different $A$-coronals \cite[Remark 3]{kn:McLeman11}.

\subsection{$L$-spectra of subdivision-vertex joins}
\label{sec:SVL}

\begin{thm}\label{SVLthm1}
Let $G_1$ be an $r_1$-regular graph on $n_1$ vertices and $m_1$ edges, and $G_2$ an arbitrary graph on $n_2$ vertices. Then
\begin{eqnarray*}
\phi\left(L(G_1\dot{\vee} G_2);x\right)&=&x\cdot(x-2)^{m_1-n_1}\cdot\Big(x^2-(2+r_1+n_1+n_2)x+2n_1+2n_2+n_1r_1\Big)\\
&&\cdot\prod_{i=2}^{n_2}\Big(x-n_1-\mu_i(G_2)\Big)\cdot\prod_{i=2}^{n_1}\Big(x^2-\big(2+r_1+n_2\big)x+2n_2+\mu_i(G_1)\Big).
\end{eqnarray*}
\end{thm}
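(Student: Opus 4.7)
The plan is to mimic the Schur-complement argument used for Theorem \ref{SVAthm1}, but now working with the Laplacian block matrix. With vertices ordered as $V(G_1)$, $I(G_1)$, $V(G_2)$, the degrees in $G_1\dot{\vee}G_2$ are $r_1+n_2$, $2$, and $n_1+d_{G_2}(w)$ respectively, so
\[
L(G_1\dot{\vee}G_2)=\bmat{(r_1+n_2)I_{n_1} & -R & -J_{n_1\times n_2}\\[0.1cm] -R^T & 2I_{m_1} & 0\\[0.1cm] -J_{n_2\times n_1} & 0 & n_1I_{n_2}+L(G_2)},
\]
where $R$ is the incidence matrix of $G_1$. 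First I would take the Schur complement of the $(3,3)$-block $(x-n_1)I_{n_2}-L(G_2)$ in $xI-L(G_1\dot{\vee}G_2)$. Using $J_{n_1\times n_2}=\mathbf{1}_{n_1}\mathbf{1}_{n_2}^T$ and the $L$-coronal identity \eqref{eq:GammaTL}, the product $J_{n_1\times n_2}((x-n_1)I-L(G_2))^{-1}J_{n_2\times n_1}$ becomes $\Gamma_{L(G_2)}(x-n_1)\,J_{n_1}=\frac{n_2}{x-n_1}J_{n_1}$. Since $\mu_1(G_2)=0$, $\det((x-n_1)I-L(G_2))=(x-n_1)\prod_{i=2}^{n_2}(x-n_1-\mu_i(G_2))$.

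Next I would take a second Schur complement, this time with respect to the $(x-2)I_{m_1}$ block, picking up a factor $(x-2)^{m_1}$ and leaving the $n_1\times n_1$ matrix
\[
T(x)=(x-r_1-n_2)I_{n_1}-\tfrac{n_2}{x-n_1}J_{n_1}-\tfrac{1}{x-2}RR^T.
\]
Using \eqref{RRINie} and regularity of $G_1$, we have $RR^T=A(G_1)+r_1I_{n_1}=2r_1I_{n_1}-L(G_1)$, so
\[
T(x)=\Bigl[(x-r_1-n_2)-\tfrac{2r_1}{x-2}\Bigr]I_{n_1}+\tfrac{1}{x-2}L(G_1)-\tfrac{n_2}{x-n_1}J_{n_1}.
\]

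The crucial observation is that $\mathbf{1}_{n_1}$ is a common eigenvector of $L(G_1)$ (eigenvalue $0$) and $J_{n_1}$ (eigenvalue $n_1$), while on the orthogonal complement of $\mathbf{1}_{n_1}$ the matrix $J_{n_1}$ vanishes and $L(G_1)$ acts with eigenvalues $\mu_2(G_1),\dots,\mu_{n_1}(G_1)$. Hence $\det T(x)$ splits as a product of one ``$\mathbf{1}$-direction'' factor and $n_1-1$ factors of the form $(x-r_1-n_2)-\frac{2r_1-\mu_i(G_1)}{x-2}$. Clearing denominators, each orthogonal factor becomes $\frac{1}{x-2}\bigl(x^2-(2+r_1+n_2)x+2n_2+\mu_i(G_1)\bigr)$, matching the last product in the theorem.

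The main obstacle is the bookkeeping for the $\mathbf{1}$-direction factor: it equals
\[
(x-r_1-n_2)-\tfrac{2r_1}{x-2}-\tfrac{n_1n_2}{x-n_1},
\]
and I would expand $(x-r_1-n_2)(x-2)(x-n_1)-2r_1(x-n_1)-n_1n_2(x-2)$ and verify by direct expansion that it factors as $x\bigl(x^2-(2+r_1+n_1+n_2)x+2n_1+2n_2+n_1r_1\bigr)$; the vanishing of the constant term (giving the factor of $x$, i.e.\ the expected Laplacian eigenvalue $0$) is the key cancellation. Finally I would combine everything: the outer factor $(x-n_1)\prod_{i=2}^{n_2}(x-n_1-\mu_i(G_2))$ cancels one $(x-n_1)$ from the denominator of the $\mathbf{1}$-direction factor, while the powers of $(x-2)$ accumulate to $(x-2)^{m_1}\cdot(x-2)^{-1}\cdot(x-2)^{-(n_1-1)}=(x-2)^{m_1-n_1}$, yielding exactly the stated formula.
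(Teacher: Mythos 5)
Your proposal is correct and follows essentially the same route as the paper: the same Laplacian block matrix, the same two Schur complements (first eliminating the $G_2$ block via $\Gamma_{L(G_2)}(x-n_1)=n_2/(x-n_1)$, then the $2I_{m_1}$ block), and the same use of $RR^T=A(G_1)+r_1I_{n_1}$ together with $\mu_i(G_1)=r_1-\lambda_i(G_1)$. The only cosmetic difference is that you evaluate the final $n_1\times n_1$ determinant by splitting $\mathbb{R}^{n_1}=\langle\mathbf{1}_{n_1}\rangle\oplus\mathbf{1}_{n_1}^{\perp}$, whereas the paper uses the rank-one/coronal identity $\det(M-cJ)=\det(M)\bigl(1-c\,\mathbf{1}^{T}M^{-1}\mathbf{1}\bigr)$; both hinge on $\mathbf{1}_{n_1}$ being an eigenvector of $RR^T$, and your cubic for the $\mathbf{1}$-direction factor does expand to $x\bigl(x^2-(2+r_1+n_1+n_2)x+2n_1+2n_2+n_1r_1\bigr)$ as claimed.
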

\begin{proof}
Let $R$ be the incidence matrix of $G_1$. Then the Laplacian matrix of $G_1\dot{\vee} G_2$ can be written as
\[L(G_1\dot{\vee} G_2)=\bmat{
                          (r_1+n_2)I_{n_1} & -R & -J_{n_1\times n_2} \\[0.2cm]
                          -R^T & 2I_{m_1} & 0_{m_1\times n_2}\\[0.2cm]
                          -J_{n_2\times n_1} & 0_{n_2\times m_1} & n_1I_{n_2}+L(G_2)
                       }.\]
Thus the Laplacian characteristic polynomial of $G_1\dot{\vee} G_2$ is given by
\begin{eqnarray*}
\phi\left(L(G_1\dot{\vee} G_2);x\right)
&=& \det\bmat{
                          (x-r_1-n_2)I_{n_1} & R & J_{n_1\times n_2} \\[0.2cm]
                          R^T & (x-2)I_{m_1} & 0_{m_1\times n_2}\\[0.2cm]
                          J_{n_2\times n_1} & 0_{n_2\times m_1} & (x-n_1)I_{n_2}-L(G_2)
                       }\\ [0.2cm]
&=& \det\big((x-n_1)I_{n_2}-L(G_2)\big)\cdot\det(S)\\ [0.2cm]
&=& \det(S)\cdot\prod_{i=1}^{n_2}\Big(x-n_1-\mu_i(G_2)\Big),
\end{eqnarray*}
where
\begin{eqnarray*}
S&=&\pmat{
          (x-r_1-n_2)I_{n_1}-\Gamma_{L(G_2)}(x-n_1)J_{n_1\times n_1} & -R \\[0.2cm]
         -R^T & (x-2)I_{m_1}
       }
\end{eqnarray*}
is the Schur complement of $(x-n_1)I_{n_2}-L(G_2)$ obtained by Proposition \ref{schur1212}. Then by Proposition \ref{schur1212} and Corollary \ref{JnnExpand}, we have
\begin{eqnarray*}
\det (S)&=&(x-2)^{m_1}\cdot\det\left((x-r_1-n_2)I_{n_1}-\Gamma_{L(G_2)}(x-n_1)J_{n_1\times n_1}-\frac{1}{x-2}RR^T\right) \\
&=& (x-2)^{m_1}\cdot\det\left((x-r_1-n_2)I_{n_1}-\frac{1}{x-2}RR^T\right)\\
&&\cdot\left(1-\Gamma_{L(G_2)}(x-n_1)\cdot\Gamma_{\frac{1}{x-2}RR^T}(x-r_1-n_2)\right).
\end{eqnarray*}
By (\ref{RRINie}) and (\ref{eq:GammaT}), we have
\begin{equation}\label{RRTx12}
\Gamma_{\frac{1}{x-2}RR^T}(x-r_1-n_2)=\frac{n_1}{x-r_1-n_2-\frac{2r_1}{x-2}}=\frac{n_1(x-2)}{x^2-(2+r_1+n_2)x+2n_2}.
\end{equation}
Then plugging (\ref{RRINie}), (\ref{eq:GammaTL}) and (\ref{RRTx12}) into $\det (S)$, we have
\begin{eqnarray*}
\det (S)&=& (x-2)^{m_1}\cdot\prod_{i=1}^{n_1}\left(x-r_1-n_2-\frac{r_1}{x-2}-\frac{1}{x-2}\lambda_i(G_1)\right)\\
&&\cdot\left(1- \frac{n_2}{x-n_1}\cdot\frac{n_1(x-2)}{x^2-(2+r_1+n_2)x+2n_2}\right)\\
&=&\frac{x}{x-n_1}\cdot(x-2)^{m_1-n_1}\cdot\prod_{i=2}^{n_1}\Big(x^2-\big(2+r_1+n_2\big)x+2n_2+\mu_i(G_1)\Big)\\
&&\cdot\Big(x^2-(2+r_1+n_1+n_2)x+2n_1+2n_2+n_1r_1\Big).
\end{eqnarray*}
Here the last step was obtained by applying the facts that $\mu_1(G_1)=0$ and $\lambda_i(G_1)=r_1-\mu_i(G_1)$ for $i=1,2,\ldots,n_1$. Then the required result follows from $\phi\left(L(G_1\dot{\vee} G_2);x\right)=\det(S)\cdot\prod\limits_{i=1}^{n_2}\Big(x-n_1-\mu_i(G_2)\Big)$ and the fact that $\mu_1(G_2)=0$.
\qed\end{proof}

Let $t(G)$ denote the number of spanning trees of $G$. It is well known \cite{kn:Cvetkovic95} that if $G$ is a connected graph on $n$ vertices with Laplacian spectrum $0=\mu_1(G)<\mu_{2}(G)\le\cdots\le\mu_n(G)$, then $$t(G)=\frac{\mu_{2}(G)\cdots\mu_n(G)}{n}.$$
Recently, computing the number of spanning trees of some specific types of graphs, especially the circulant graphs\cite{kn:Atajan06,kn:Ozeki11,kn:ZhangYong00,kn:ZhangYong05}, attracted many researchers' attention. Here, we obtain the number of the spanning trees of $G_1\dot{\vee} G_2$ for an $r_1$-regular graph $G_1$ and an arbitrary graph $G_2$.
\begin{cor}
\label{cor:Sptree}
Let $G_1$ be an $r_1$-regular graph on $n_1$ vertices and $m_1$ edges, and $G_2$ an arbitrary graph on $n_2$ vertices. Then
\begin{eqnarray*}
t(G_1\dot{\vee} G_2)=\frac{2^{m_1-n_1}\cdot(2n_1+2n_2+n_1r_1)\cdot\prod_{i=2}^{n_2}\Big(n_1+\mu_i(G_2)\Big)\cdot\prod_{i=2}^{n_1}\Big(2n_2+\mu_i(G_1)\Big)}{m_1+n_1+n_2}.
\end{eqnarray*}
\end{cor}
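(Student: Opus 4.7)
The proof would proceed directly from Theorem \ref{SVLthm1} by applying the Matrix Tree Theorem.

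First I would observe that $G_1\dot{\vee}G_2$ has $N:=n_1+m_1+n_2$ vertices and is connected whenever $n_2\geq 1$, since each vertex of $V(G_1)$ is joined to every vertex of $V(G_2)$, and each inserted vertex is adjacent to two vertices of $V(G_1)$. Hence the Matrix Tree Theorem gives
\[
t(G_1\dot{\vee}G_2)=\frac{1}{N}\prod_{i=2}^{N}\mu_i(G_1\dot{\vee}G_2).
\]

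Next, I would extract the product of non-zero Laplacian eigenvalues from the characteristic polynomial. Writing $\phi(L(G_1\dot{\vee}G_2);x)=x\cdot q(x)$, we have $q(0)=\prod_{i=2}^{N}(-\mu_i(G_1\dot{\vee}G_2))=(-1)^{N-1}\prod_{i=2}^{N}\mu_i(G_1\dot{\vee}G_2)$. From Theorem \ref{SVLthm1},
\[
q(x)=(x-2)^{m_1-n_1}\bigl(x^2-(2+r_1+n_1+n_2)x+2n_1+2n_2+n_1r_1\bigr)\prod_{i=2}^{n_2}(x-n_1-\mu_i(G_2))\prod_{i=2}^{n_1}\bigl(x^2-(2+r_1+n_2)x+2n_2+\mu_i(G_1)\bigr),
\]
so evaluating at $x=0$ yields
\[
q(0)=(-2)^{m_1-n_1}(2n_1+2n_2+n_1r_1)\prod_{i=2}^{n_2}(-n_1-\mu_i(G_2))\prod_{i=2}^{n_1}(2n_2+\mu_i(G_1)).
\]

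Finally, I would track the signs. The factor $(-2)^{m_1-n_1}$ contributes $(-1)^{m_1-n_1}$, the product over $i=2,\ldots,n_2$ contributes $(-1)^{n_2-1}$, and combining with the overall $(-1)^{N-1}=(-1)^{n_1+m_1+n_2-1}$ gives $(-1)^{2m_1+2n_2-2}=1$. Therefore
\[
\prod_{i=2}^{N}\mu_i(G_1\dot{\vee}G_2)=2^{m_1-n_1}(2n_1+2n_2+n_1r_1)\prod_{i=2}^{n_2}(n_1+\mu_i(G_2))\prod_{i=2}^{n_1}(2n_2+\mu_i(G_1)),
\]
and dividing by $N=m_1+n_1+n_2$ yields the claimed formula.

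This is essentially a bookkeeping argument; no step is genuinely difficult. The only place where one must be careful is the sign tracking when evaluating $\phi(L;x)/x$ at $x=0$, since the quadratic factors and the linear factors each contribute sign flips that must be reconciled with the global $(-1)^{N-1}$ from the Matrix Tree formula. The parity cancellation above shows everything simplifies cleanly.
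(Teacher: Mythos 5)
Your argument is correct and is exactly the computation the paper leaves implicit: the paper deduces the corollary ``readily'' from Theorem \ref{SVLthm1} via the standard formula $t(G)=\frac{1}{n}\prod_{i=2}^{n}\mu_i(G)$ for a connected graph, which is precisely your route, and your sign bookkeeping (the only delicate step) checks out.
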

\begin{proof}
By Theorem \ref{SVLthm1}, the roots of $\phi\left(L(G_1\dot{\vee} G_2);x\right)$ are as follows:
\begin{itemize}
  \item[\rm (a)] $0$;
  \item[\rm (b)] $2$, repeated $m_1-n_1$ times;
  \item[\rm (c)] $n_1+\mu_i(G_2)$, for $i=2,3,\ldots,n_2$;
  \item[\rm (d)] two roots $x_1, x_2$ of the equation $x^2-(2+r_1+n_1+n_2)x+2n_1+2n_2+n_1r_1$;
  \item[\rm (e)] two roots $x_{j,1}, x_{j,2}$ of the equation $x^2-\big(2+r_1+n_2\big)x+2n_2+\mu_j(G_1)$, for $j=2,3,\ldots,n_1$.
\end{itemize}
By the relation betwen coefficients and roots of a polynomial, we have $x_1x_2=2n_1+2n_2+n_1r_1$, and $x_{j,1}x_{j,2}=2n_2+\mu_j(G_1)$ for $j=2,3,\ldots,n_1$. Then the required result is obtained by the definition of the number of spanning trees of a graph.
\qed\end{proof}

The \emph{Kirchhoff index} of a graph $G$, denoted by $Kf(G)$, is defined as the sum of resistance distances between all pairs of vertices \cite{kn:Bonchev94,kn:Klein93}. At almost exactly the same time, Gutman et al. \cite{kn:Gutman96} and Zhu et al. \cite{kn:Zhu96} proved that the Kirchhoff index of a connected graph $G$ with $n\,(n\geq2)$ vertices can be expressed as $$Kf(G)= n\sum_{i=2}^{n}\frac{1}{\mu_i(G)},$$
where $\mu_2(G), \ldots, \mu_n(G)$ are the non-zero Laplacian eigenvalues of $G$. Up till now, the Kirchhoff indices of many graph operations have been investigated, such as products, lexicographic products, joins,
coronae, clusters, line (respectively, subdivision and total) graphs of regular graphs and so on \cite{kn:Gao12,kn:WangWZ13,kn:Xu03,kn:ZhangHP09}. By Theorem \ref{SVLthm1}, we obtain the Kirchhoff index of the subdivision-vetex join $G_1\dot{\vee} G_2$ for an $r_1$-regular graph $G_1$ and an arbitrary graph $G_2$.

\begin{cor}
\label{cor:KirIn}
Let $G_1$ be an $r_1$-regular graph on $n_1$ vertices and $m_1$ edges, and $G_2$ an arbitrary graph on $n_2$ vertices. Then
\begin{align*}
   Kf(G_1\dot{\vee} G_2) &  = (m_1+n_1+n_2)\\
                         &  \quad\times\left(\frac{m_1-n_1}{2}+\frac{2+r_1+n_1+n_2}{2n_1+2n_2+n_1r_1}+\sum_{i=2}^{n_2}\frac{1}{n_1+\mu_i(G_2)}+\sum_{i=2}^{n_1}\frac{2+r_1+n_2}{2n_2+\mu_i(G_1)}\right).
\end{align*}
\end{cor}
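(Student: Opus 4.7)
The plan is to read off the Laplacian eigenvalues of $G_1\dot{\vee} G_2$ directly from Theorem \ref{SVLthm1} and then apply the formula $Kf(G)=n\sum_{i=2}^{n}\frac{1}{\mu_i(G)}$, handling the quadratic factors via Vieta's formulas rather than by extracting the roots explicitly.

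First I would note that $G_1\dot{\vee} G_2$ has $n_1+m_1+n_2$ vertices, and that from Theorem \ref{SVLthm1} the Laplacian spectrum consists of: (i) the eigenvalue $0$ (once), (ii) the eigenvalue $2$ with multiplicity $m_1-n_1$, (iii) the two roots $\alpha_1,\alpha_2$ of $x^2-(2+r_1+n_1+n_2)x+(2n_1+2n_2+n_1r_1)=0$, (iv) the values $n_1+\mu_i(G_2)$ for $i=2,\ldots,n_2$, and (v) for each $i=2,\ldots,n_1$ the two roots $\beta_i^{(1)},\beta_i^{(2)}$ of $x^2-(2+r_1+n_2)x+(2n_2+\mu_i(G_1))=0$. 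A quick sanity check that these multiplicities sum to $n_1+m_1+n_2$ is useful. One should also observe that $0$ appears with multiplicity exactly one, so $G_1\dot{\vee} G_2$ is connected and the Gutman--Zhu formula applies.

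Next I would compute the sum of reciprocals of the non-zero eigenvalues piece by piece. The flat eigenvalue $2$ contributes $(m_1-n_1)/2$. For the pair $\alpha_1,\alpha_2$, Vieta gives $\alpha_1+\alpha_2=2+r_1+n_1+n_2$ and $\alpha_1\alpha_2=2n_1+2n_2+n_1r_1$, so $\tfrac{1}{\alpha_1}+\tfrac{1}{\alpha_2}=\tfrac{2+r_1+n_1+n_2}{2n_1+2n_2+n_1r_1}$. The eigenvalues from (iv) contribute $\sum_{i=2}^{n_2}\tfrac{1}{n_1+\mu_i(G_2)}$. Finally, for each fixed $i$ in (v), Vieta gives $\beta_i^{(1)}+\beta_i^{(2)}=2+r_1+n_2$ and $\beta_i^{(1)}\beta_i^{(2)}=2n_2+\mu_i(G_1)$, hence $\tfrac{1}{\beta_i^{(1)}}+\tfrac{1}{\beta_i^{(2)}}=\tfrac{2+r_1+n_2}{2n_2+\mu_i(G_1)}$, and summing over $i=2,\ldots,n_1$ yields the last term of the claimed formula. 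Multiplying the total by $n_1+m_1+n_2$ gives exactly the stated expression.

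There is no conceptual obstacle here; the only delicate point is the bookkeeping check that $0$ has multiplicity one (so that $Kf$ is well-defined as stated and the remaining factors in $\phi(L(G_1\dot{\vee} G_2);x)$ all have nonzero constant terms for generic $\mu_i$). Provided $\mu_i(G_1)>0$ for $i\ge 2$ (which holds whenever $G_1$ is connected) and $n_1,n_2\ge 1$, each quadratic in (iii) and (v) has a strictly positive constant term, so none of its roots is zero; similarly $n_1+\mu_i(G_2)>0$. Thus the Gutman--Zhu formula applies cleanly and the calculation above yields Corollary \ref{cor:KirIn} after factoring out $(m_1+n_1+n_2)$.
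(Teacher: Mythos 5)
Your proposal is correct and is exactly the argument the paper intends: the corollary is stated as an immediate consequence of Theorem \ref{SVLthm1}, obtained by reading off the Laplacian spectrum, applying the Gutman--Zhu formula $Kf(G)=n\sum_{i\ge 2}1/\mu_i(G)$, and handling the quadratic factors via Vieta's formulas. Your additional bookkeeping (degree count and positivity of the constant terms, ensuring $0$ has multiplicity one) is a sensible check that the paper leaves implicit.
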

\begin{proof} Notice that the roots of $\phi\left(L(G_1\dot{\vee} G_2);x\right)$ are given in the proof of Corollary \ref{cor:Sptree}. By the relation betwen coefficients and roots of a polynomial, we have $x_1+x_2=2+r_1+n_1+n_2$, and $x_{j,1}+x_{j,2}=2+r_1+n_2$ for $j=2,3,\ldots,n_1$. Then
\[\dfrac{1}{x_1}+\dfrac{1}{x_2}=\dfrac{x_1+x_2}{x_1x_2}=\dfrac{2+r_1+n_1+n_2}{2n_1+2n_2+n_1r_1},\]
and
\[\dfrac{1}{x_{j,1}}+\dfrac{1}{x_{j,2}}=\dfrac{x_{j,1}+x_{j,2}}{x_{j,1}x_{j,2}}=\dfrac{2+r_1+n_2}{2n_2+\mu_j(G_1)},\] for $j=2,3,\ldots,n_1$. So the required result is obtained by the definition of the Kirchhoff index of a graph.
\qed\end{proof}

Similar to Corollary \ref{cor:SVAcosp}, Theorem \ref{SVLthm1} enables us to construct infinitely many pairs of $L$-cospectral graphs.

\begin{cor}
\label{cor:SVLcosp}
\begin{itemize}
  \item[\rm (a)]  If $G_1$ and $G_2$ are $L$-cospectral regular graphs, and $H$ is an arbitrary graph, then $G_1\dot{\vee} H$ and $G_2\dot{\vee} H$ are $L$-cospectral.
  \item[\rm (b)]  If $G$ is a regular graph, and $H_1$ and $H_2$ are $L$-cospectral graphs, then $G\dot{\vee} H_1$ and $G\dot{\vee} H_2$ are $L$-cospectral.
  \item[\rm (c)]  If $G_1$ and $G_2$ are $L$-cospectral regular graphs, and $H_1$ and $H_2$ are $L$-cospectral graphs, then $G_1\dot{\vee} H_1$ and $G_2\dot{\vee} H_2$ are $L$-cospectral.
\end{itemize}
\end{cor}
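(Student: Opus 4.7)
The plan is to apply Theorem \ref{SVLthm1} directly and observe that, for each of (a)--(c), every quantity appearing in the displayed factorization of $\phi(L(G_1\dot{\vee}G_2);x)$ is forced to agree on the two sides by the stated cospectrality hypothesis. A structural feature of that formula worth flagging at the outset, inherited from the use of (\ref{eq:GammaTL}) in the proof of Theorem \ref{SVLthm1}, is that the second graph contributes only through its order $n_2$ and its Laplacian eigenvalues---the coronal $\Gamma_{L(G_2)}$ has been replaced wholesale by $n_2/x$---which is exactly why, in contrast with Corollary \ref{cor:SVAcosp}(b), no auxiliary coronal condition on $H_1, H_2$ is needed in part (b).

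I would dispose of part (b) first. If $G$ is regular and $H_1, H_2$ are $L$-cospectral, then $n_2 := |V(H_1)| = |V(H_2)|$ and $\mu_i(H_1) = \mu_i(H_2)$ for every $i$, and the second graph enters Theorem \ref{SVLthm1} only through $n_2$ and the factor $\prod_{i=2}^{n_2}(x - n_1 - \mu_i(H_j))$, so the two Laplacian characteristic polynomials coincide. For part (a) the main task is a parameter-matching step: if $G_1, G_2$ are $L$-cospectral regular graphs, equality of $L$-spectra forces $n_1(G_1) = n_1(G_2)$; taking traces gives $\sum_i \mu_i(G_1) = \sum_i \mu_i(G_2)$, hence $|E(G_1)| = |E(G_2)|$; and regularity then pins down the common degree $r_1 = 2|E|/n_1$. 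With these parameters aligned and the $\mu_i(G_j)$ coinciding, every factor in Theorem \ref{SVLthm1} matches, proving (a). Part (c) follows by composing (a) and (b), for instance via the intermediate graph $G_1\dot{\vee}H_2$: (a) gives $L$-cospectrality of $G_1\dot{\vee}H_2$ and $G_2\dot{\vee}H_2$, while (b) gives $L$-cospectrality of $G_1\dot{\vee}H_1$ and $G_1\dot{\vee}H_2$.

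The only step beyond pure bookkeeping is the trace/regularity argument in (a), and even that is standard, so I anticipate no substantive obstacle.
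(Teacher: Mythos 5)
Your proposal is correct and matches the paper's intended argument: the corollary is stated as an immediate consequence of Theorem \ref{SVLthm1}, whose right-hand side depends only on $n_1$, $m_1$, $r_1$, the $L$-spectrum of $G_1$, $n_2$, and the $L$-spectrum of $G_2$, all of which are determined by the cospectrality hypotheses (with the trace argument correctly supplying $m_1$ and hence $r_1$ in part (a)). Nothing further is needed.
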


\subsection{$Q$-spectra of subdivision-vertex joins}
\label{sec:SVQ}

\begin{thm}\label{SVQthm1}
Let $G_1$ be an $r_1$-regular graph on $n_1$ vertices and $m_1$ edges, and $G_2$ an arbitrary graph on $n_2$ vertices. Then
\begin{eqnarray*}
\phi\left(Q(G_1\dot{\vee} G_2);x\right)&=&(x-2)^{m_1-n_1}\cdot\Big(x^2-\big(2+r_1+n_2\big)x+2n_2-n_1(x-2)\cdot\Gamma_{Q(G_2)}(x-n_1)\Big)\\
&&\cdot\prod_{i=1}^{n_2}\Big(x-n_1-\nu_i(G_2)\Big)\cdot\prod_{i=1}^{n_1-1}\Big(x^2-\big(2+r_1+n_2\big)x+2(r_1+n_2)-\nu_i(G_1)\Big).
\end{eqnarray*}
\end{thm}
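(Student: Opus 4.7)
The plan is to follow the same two-step Schur-complement strategy as in the proofs of Theorems \ref{SVAthm1} and \ref{SVLthm1}, adapted to the signless Laplacian. With the vertex ordering $V(G_1), I(G_1), V(G_2)$, each vertex of $V(G_1)$ has degree $r_1+n_2$, each inserted vertex has degree $2$, and each $v\in V(G_2)$ has degree $d_{G_2}(v)+n_1$ in $G_1\dot{\vee} G_2$. Letting $R$ be the incidence matrix of $G_1$, this gives
\[
Q(G_1\dot{\vee} G_2)=\begin{bmatrix}(r_1+n_2)I_{n_1} & R & J_{n_1\times n_2}\\ R^T & 2I_{m_1} & 0_{m_1\times n_2}\\ J_{n_2\times n_1} & 0_{n_2\times m_1} & n_1I_{n_2}+Q(G_2)\end{bmatrix}.
\]
The first step is to form $\phi(Q(G_1\dot{\vee} G_2);x)$ and take the Schur complement of the $(3,3)$-block $(x-n_1)I_{n_2}-Q(G_2)$; this produces the factor $\prod_{i=1}^{n_2}(x-n_1-\nu_i(G_2))$ together with a $2\times 2$ block matrix $S$ whose $(1,1)$-block is $(x-r_1-n_2)I_{n_1}-\Gamma_{Q(G_2)}(x-n_1)\,J_{n_1\times n_1}$. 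The shift $x-n_1$ inside the coronal comes from the $n_1I_{n_2}$ term, and the $J_{n_1\times n_1}$ arises as $J_{n_1\times n_2}\bigl((x-n_1)I_{n_2}-Q(G_2)\bigr)^{-1}J_{n_2\times n_1}$. A second Schur complement, this time of the $(2,2)$-block $(x-2)I_{m_1}$ in $S$, reduces $\det(S)$ to
\[
(x-2)^{m_1}\det\!\left((x-r_1-n_2)I_{n_1}-\Gamma_{Q(G_2)}(x-n_1)\,J_{n_1\times n_1}-\tfrac{1}{x-2}RR^T\right).
\]

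The heart of the proof is the evaluation of this final $n_1\times n_1$ determinant. Since $G_1$ is $r_1$-regular, (\ref{RRINie}) gives $RR^T=A(G_1)+r_1I_{n_1}$, so the eigenvalues of $RR^T$ coincide as a multiset with the $Q$-eigenvalues $\nu_1(G_1),\ldots,\nu_{n_1}(G_1)$ of $G_1$. The all-ones vector $\mathbf{1}_{n_1}$ is a common eigenvector of $RR^T$ (with eigenvalue $\nu_{n_1}(G_1)=2r_1$) and of $J_{n_1\times n_1}$ (with eigenvalue $n_1$); on the orthogonal complement of $\mathbf{1}_{n_1}$ we have $J_{n_1\times n_1}\equiv 0$ while $RR^T$ carries the remaining eigenvalues $\nu_1(G_1),\ldots,\nu_{n_1-1}(G_1)$. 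I would therefore diagonalise in a basis extending $\mathbf{1}_{n_1}/\sqrt{n_1}$: the $\mathbf{1}_{n_1}$-direction contributes the single factor $(x-r_1-n_2)-n_1\Gamma_{Q(G_2)}(x-n_1)-2r_1/(x-2)$, and each of the remaining $n_1-1$ orthogonal eigenvectors contributes a factor $(x-r_1-n_2)-\nu_i(G_1)/(x-2)$. Distributing the prefactor as $(x-2)^{m_1}=(x-2)^{m_1-n_1}\cdot(x-2)\cdot(x-2)^{n_1-1}$ clears all denominators, producing the quadratic $x^2-(2+r_1+n_2)x+2n_2-n_1(x-2)\Gamma_{Q(G_2)}(x-n_1)$ and the product $\prod_{i=1}^{n_1-1}\bigl(x^2-(2+r_1+n_2)x+2(r_1+n_2)-\nu_i(G_1)\bigr)$, and combining with the earlier factor yields the stated formula.

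The main obstacle is handling the rank-one perturbation $\Gamma_{Q(G_2)}(x-n_1)J_{n_1\times n_1}$ together with $RR^T$; these two matrices decouple cleanly only because the regularity of $G_1$ forces $\mathbf{1}_{n_1}$ to be a common eigenvector, which is why the hypothesis on $G_1$ cannot be dropped. Unlike the Laplacian proof, where $\Gamma_{L(G_2)}(x)=n_2/x$ by (\ref{eq:GammaTL}) produces a closed-form simplification, the rows of $Q(G_2)$ need not sum to a constant, so $\Gamma_{Q(G_2)}$ admits no general simplification and must be carried symbolically into the final expression, exactly as in the statement.
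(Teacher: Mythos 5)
Your proposal is correct and follows essentially the same route as the paper: the paper's own proof simply writes down the same block form of $Q(G_1\dot{\vee}G_2)$ and then repeats the two-step Schur-complement computation from Theorem \ref{SVLthm1} with $RR^T=Q(G_1)$. The only cosmetic difference is that you evaluate the final determinant by simultaneously diagonalising $RR^T$ and $J_{n_1\times n_1}$ (valid since regularity of $G_1$ makes them commute), whereas the paper uses the adjugate/coronal identity $\det(M-\gamma J)=\det(M)\bigl(1-\gamma\,\mathbf{1}^TM^{-1}\mathbf{1}\bigr)$; both yield the same factorisation.
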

\begin{proof}
Let $R$ be the incidence matrix of $G_1$. Then the signless Laplacian matrix of $G_1\dot{\vee} G_2$ can be written as
\[Q(G_1\dot{\vee} G_2)=\bmat{
                          (r_1+n_2)I_{n_1} & R & J_{n_1\times n_2} \\[0.2cm]
                          R^T & 2I_{m_1} & 0_{m_1\times n_2}\\[0.2cm]
                          J_{n_2\times n_1} & 0_{n_2\times m_1} & n_1I_{n_2}+Q(G_2)
                       }.\]
The result follows by applying $RR^T=Q(G_1)$ and refining the arguments used to prove Theorem \ref{SVLthm1}.
\qed\end{proof}

Again, by applying (\ref{eq:GammaT}), Theorem \ref{SVQthm1} implies the following result.
\begin{cor}\label{SVQcor1}
Let $G_1$ be an $r_1$-regular graphs on $n_1$ vertices and $m_1$ edges, and $G_2$ an $r_2$-regular graphs on $n_2$ vertices. Then
\begin{eqnarray*}
\phi\left(Q(G_1\dot{\vee} G_2);x\right)&=&(x-2)^{m_1-n_1}\cdot(x^3-ax^2+bx-4r_2n_2)\cdot\prod_{i=1}^{n_2-1}\Big(x-n_1-\nu_i(G_2)\Big)\\
&&\cdot\prod_{i=1}^{n_1-1}\Big(x^2-\big(2+r_1+n_2\big)x+2(r_1+n_2)-\nu_i(G_1)\Big),
\end{eqnarray*}
where $a=2+2r_2+r_1+n_1+n_2$ and $b=2n_1+2n_2+n_1r_1+2r_1r_2+2r_2n_2+4r_2$.
\end{cor}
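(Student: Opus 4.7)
The plan is to derive this corollary directly from Theorem~\ref{SVQthm1} by exploiting the regularity of $G_2$ in two ways: to evaluate the coronal $\Gamma_{Q(G_2)}$ explicitly and to identify a factor of the product $\prod_{i=1}^{n_2}(x-n_1-\nu_i(G_2))$ that will cancel the denominator produced by the coronal.

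First I would note that since $G_2$ is $r_2$-regular, every row sum of $Q(G_2)=D(G_2)+A(G_2)$ equals $2r_2$. Applying (\ref{eq:GammaT}) yields
\[
\Gamma_{Q(G_2)}(x-n_1)=\frac{n_2}{x-n_1-2r_2}.
\]
Next, since $G_2$ is $r_2$-regular, its signless Laplacian eigenvalues are of the form $r_2+\lambda_i(G_2)$; in particular $\nu_{n_2}(G_2)=2r_2$. Hence
\[
\prod_{i=1}^{n_2}\bigl(x-n_1-\nu_i(G_2)\bigr)=(x-n_1-2r_2)\cdot\prod_{i=1}^{n_2-1}\bigl(x-n_1-\nu_i(G_2)\bigr).
\]

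Substituting both of these into the expression given in Theorem~\ref{SVQthm1}, the factor $(x-n_1-2r_2)$ from the product cancels the denominator of the coronal term. What remains is
\[
(x-2)^{m_1-n_1}\cdot C(x)\cdot\prod_{i=1}^{n_2-1}\bigl(x-n_1-\nu_i(G_2)\bigr)\cdot\prod_{i=1}^{n_1-1}\Bigl(x^2-(2+r_1+n_2)x+2(r_1+n_2)-\nu_i(G_1)\Bigr),
\]
where
\[
C(x)=\bigl(x^2-(2+r_1+n_2)x+2n_2\bigr)(x-n_1-2r_2)-n_1n_2(x-2).
\]

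The remaining step is to expand $C(x)$ and check that it equals the cubic $x^3-ax^2+bx-4r_2n_2$ with $a$ and $b$ as stated. This is a routine polynomial computation: collecting the coefficient of $x^2$ gives $-(n_1+2r_2+2+r_1+n_2)=-a$; collecting the coefficient of $x$ gives $(2+r_1+n_2)(n_1+2r_2)+2n_2-n_1n_2$, which simplifies to $b$; and the constant term is $-2n_2(n_1+2r_2)+2n_1n_2=-4r_2n_2$. There is no genuine obstacle here; the only thing to watch is bookkeeping in the expansion of $C(x)$ and the careful identification of the correct eigenvalue $\nu_{n_2}(G_2)=2r_2$ so that the product is split at the right index.
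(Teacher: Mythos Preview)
Your proposal is correct and follows exactly the approach the paper indicates: apply (\ref{eq:GammaT}) to the formula in Theorem~\ref{SVQthm1}, split off the factor $(x-n_1-2r_2)$ corresponding to $\nu_{n_2}(G_2)=2r_2$, and expand the resulting cubic. The paper omits all of these details, so your write-up is in fact more explicit than the original.
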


Theorem \ref{SVQthm1} enables us to construct infinitely many pairs of $Q$-cospectral graphs.

\begin{cor}
\label{cor:SVQcosp}
\begin{itemize}
  \item[\rm (a)]  If $G_1$ and $G_2$ are $Q$-cospectral regular graphs, and $H$ is a regular graph, then $G_1\dot{\vee} H$ and $G_2\dot{\vee} H$ are $Q$-cospectral.
  \item[\rm (b)]  If $G$ is a regular graph, and $H_1$ and $H_2$ are $Q$-cospectral graphs with $\Gamma_{Q(H_1)}(x)=\Gamma_{Q(H_2)}(x)$, then $G\dot{\vee} H_1$ and $G\dot{\vee} H_2$ are $Q$-cospectral.
\end{itemize}
\end{cor}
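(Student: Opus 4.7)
The plan is to apply Theorem \ref{SVQthm1} to each member of the claimed cospectral pair and check that every input datum to the formula agrees. Inspecting the formula, $\phi(Q(G_1\dot{\vee}G_2);x)$ depends only on the following data: the regularity $r_1$ and the orders $n_1, m_1$ of $G_1$, the multiset of $Q$-eigenvalues $\nu_i(G_1)$, the order $n_2$ of $G_2$, the multiset of $Q$-eigenvalues $\nu_i(G_2)$, and the coronal $\Gamma_{Q(G_2)}(x)$. No other information about the two factors enters.

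For part (a), compare $G_1\dot{\vee}H$ with $G_2\dot{\vee}H$. Two $Q$-cospectral graphs share the size of their $Q$-matrix, hence the same number of vertices; for regular graphs the largest $Q$-eigenvalue equals $2r$, so $G_1$ and $G_2$ also share the same regularity, and therefore the same number of edges $m_1 = n_1 r_1/2$. Together with the assumed common $Q$-spectrum, this matches all ``first-factor'' data entering Theorem \ref{SVQthm1}. The ``second-factor'' data is trivially matched since $H$ is literally the same graph on both sides. Hence the two $Q$-characteristic polynomials are equal.

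For part (b), compare $G\dot{\vee}H_1$ and $G\dot{\vee}H_2$. The first-factor data coming from the common graph $G$ is identical on both sides. The second-factor data consists of the order $n_{H_i}$ (equal since $H_1, H_2$ are $Q$-cospectral, so their $Q$-matrices have the same size), the $Q$-spectrum (equal by $Q$-cospectrality), and the coronal $\Gamma_{Q(H_i)}(x)$ (equal by the extra hypothesis). Theorem \ref{SVQthm1} then yields equal characteristic polynomials.

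There is no genuine obstacle: the corollary is essentially a direct read-off of which data enter the formula of Theorem \ref{SVQthm1}. The only subtlety is why part (b) requires the additional hypothesis $\Gamma_{Q(H_1)}(x) = \Gamma_{Q(H_2)}(x)$. This is because the coronal of the second factor explicitly appears in the formula, while $Q$-cospectrality alone does \emph{not} force the coronals to agree, as noted after Corollary \ref{cor:SVAcosp}. In part (a) no such assumption is needed because the second factor $H$ is the same graph in both joins.
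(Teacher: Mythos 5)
Your proof is correct and matches the paper's intended argument: the corollary is stated without proof precisely because it is the direct read-off from Theorem \ref{SVQthm1} that you describe, and you correctly supply the one nontrivial ingredient, namely that $Q$-cospectral regular graphs share order, degree (via the largest $Q$-eigenvalue $2r$), and hence size, so all first-factor data in the formula agree. Incidentally, your argument for part (a) never uses the regularity of $H$, so it actually establishes the slightly stronger statement with $H$ arbitrary, in line with the analogous $A$- and $L$-corollaries.
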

Similar to Corollary \ref{cor:SVAcosp}, the condition $\Gamma_{Q(H_1)}(x)=\Gamma_{Q(H_2)}(x)$ in (b) is not redundant since $Q$-cospectral graphs may have different $Q$-coronals.

\section{Spectra of subdivision-edge joins}\label{SE:spec}
In this section, we determine the spectra of subdivision-edge joins.
\subsection{$A$-spectra of subdivision-edge joins}
\label{sec:SEA}

\begin{thm}\label{SEAthm1}
Let $G_1$ be an $r_1$-regular graph on $n_1$ vertices and $m_1$ edges, and $G_2$ an arbitrary graph on $n_2$ vertices. Then
\begin{eqnarray*}
\phi\left(A(G_1 \underline{\vee}  G_2);x\right)=\phi(A(G_2);x)\cdot x^{m_1-n_1}\cdot\Big(x^2-m_1 x\Gamma_{A(G_2)}(x)-2r_1\Big)\cdot\prod_{i=2}^{n_1}\Big(x^2-r_1-\lambda_i(G_1)\Big).
\end{eqnarray*}
\end{thm}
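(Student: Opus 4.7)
The plan is to mirror the proof of Theorem \ref{SVAthm1}, with the essential difference that the all-ones blocks now connect $V(G_2)$ to the $m_1$ inserted vertices of $\mathcal{S}(G_1)$ rather than to the $n_1$ original vertices. Accordingly, I would label the vertices so that the adjacency matrix of $G_1\underline{\vee} G_2$ takes the block form
$$A(G_1\underline{\vee} G_2)=\bmat{0_{n_1\times n_1} & R & 0_{n_1\times n_2}\\[0.2cm] R^T & 0_{m_1\times m_1} & J_{m_1\times n_2}\\[0.2cm] 0_{n_2\times n_1} & J_{n_2\times m_1} & A(G_2)},$$
where $R$ is the incidence matrix of $G_1$. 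Taking the Schur complement of $xI_{n_2}-A(G_2)$ yields
$$\phi(A(G_1\underline{\vee} G_2);x)=\phi(A(G_2);x)\cdot\det(S),\quad S=\pmat{xI_{n_1} & -R\\[0.2cm] -R^T & xI_{m_1}-\Gamma_{A(G_2)}(x)\,J_{m_1\times m_1}}.$$

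Since the rank-one perturbation $\Gamma_{A(G_2)}(x)\,J_{m_1\times m_1}$ now sits in the lower-right block instead of the upper-left one, I would next take the Schur complement with respect to the invertible $(1,1)$ block $xI_{n_1}$, obtaining
$$\det(S)=x^{n_1}\cdot\det\!\left(xI_{m_1}-\Gamma_{A(G_2)}(x)\,J_{m_1\times m_1}-\tfrac{1}{x}R^T R\right).$$
Following the same adjugate/matrix-determinant-lemma manipulation used in the proof of Theorem \ref{SVAthm1}, this expression factors as
$$x^{n_1}\cdot\det\!\left(xI_{m_1}-\tfrac{1}{x}R^T R\right)\cdot\left(1-\Gamma_{A(G_2)}(x)\cdot\Gamma_{\frac{1}{x}R^T R}(x)\right).$$

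The crucial observation is that $R^T\mathbf{1}_{n_1}=2\,\mathbf{1}_{m_1}$ and, by (\ref{RRINie}), $R\,\mathbf{1}_{m_1}=r_1\mathbf{1}_{n_1}$, so $\mathbf{1}_{m_1}$ is an eigenvector of $R^T R$ with eigenvalue $2r_1$. Hence (\ref{eq:GammaT}) gives $\Gamma_{\frac{1}{x}R^T R}(x)=m_1 x/(x^2-2r_1)$, and this is precisely where $m_1$ appears in place of the $n_1$ of Theorem \ref{SVAthm1}. Moreover, $R^T R$ and $RR^T$ share the same nonzero eigenvalues, and by (\ref{RRINie}) the eigenvalues of $RR^T$ are $r_1+\lambda_i(G_1)$, so
$$\det\!\left(xI_{m_1}-\tfrac{1}{x}R^T R\right)=x^{m_1-2n_1}\prod_{i=1}^{n_1}\!\big(x^2-r_1-\lambda_i(G_1)\big).$$

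Putting everything together, the factor $1-\Gamma_{A(G_2)}(x)\cdot m_1 x/(x^2-2r_1)$ combines with the $i=1$ term $(x^2-r_1-\lambda_1(G_1))=(x^2-2r_1)$ so that the denominator $x^2-2r_1$ cancels, producing the factor $x^2-2r_1-m_1 x\,\Gamma_{A(G_2)}(x)$ and leaving $\prod_{i=2}^{n_1}(x^2-r_1-\lambda_i(G_1))$, which matches the claimed formula. I do not anticipate a genuine obstacle beyond careful bookkeeping of dimensions when swapping the roles of $RR^T$ (dimension $n_1$) and $R^T R$ (dimension $m_1$); the regularity of $G_1$ is what makes $\mathbf{1}_{m_1}$ an eigenvector of $R^T R$ and thus keeps the computation tractable.
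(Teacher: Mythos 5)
Your proposal is correct and follows essentially the same route as the paper: the same block form for $A(G_1\underline{\vee}G_2)$, the same Schur complement of $xI_{n_2}-A(G_2)$ followed by elimination of the $xI_{n_1}$ block, and the same coronal/adjugate manipulation, with $\Gamma_{\frac{1}{x}R^TR}(x)=m_1x/(x^2-2r_1)$ coming from the constant row sum $2r_1$ of $R^TR$. The only (immaterial) difference is that you evaluate $\det\left(xI_{m_1}-\frac{1}{x}R^TR\right)$ via the coincidence of the nonzero spectra of $R^TR$ and $RR^T$ together with (\ref{RRINie}), whereas the paper invokes (\ref{RTRNN}) and the known spectrum of the line graph $\mathcal{L}(G_1)$ of a regular graph --- two phrasings of the same fact.
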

\begin{proof}
Let $R$ be the incidence matrix of $G_1$. Then the adjacency matrix of $G_1\underline{\vee} G_2$ can be written as
\[A(G_1\underline{\vee} G_2)=\bmat{
                          0_{n_1\times n_1} & R & 0_{n_1\times n_2} \\[0.2cm]
                          R^T & 0_{m_1\times m_1} & J_{m_1\times n_2}\\[0.2cm]
                          0_{n_2\times n_1} & J_{n_2\times m_1} & A(G_2)
                       }.\]
Thus the adjacency characteristic polynomial of $G_1\underline{\vee} G_2$ is given by
\begin{eqnarray*}
\phi\left(A(G_1\underline{\vee} G_2);x\right)
&=& \det\bmat{
                          xI_{n_1} & -R & 0_{n_1\times n_2} \\[0.2cm]
                          -R^T & xI_{m_1} & -J_{m_1\times n_2}\\[0.2cm]
                          0_{n_2\times n_1} & -J_{n_2\times m_1} & xI_{n_2}-A(G_2)
                       }\\ [0.2cm]
&=& \det(xI_{n_2}-A(G_2))\cdot\det(S)\\ [0.2cm]
&=& \phi(A(G_2))\cdot\det(S),
\end{eqnarray*}
where
\begin{eqnarray*}
S&=&\pmat{
          xI_{n_1} & -R \\[0.2cm]
         -R^T & xI_{m_1}-\Gamma_{A(G_2)}(x)J_{m_1\times m_1}
       }
\end{eqnarray*}
is the Schur complement of $xI_{n_2}-A(G_2)$ obtained by Proposition \ref{schur1212}. By Proposition \ref{schur1212} and Corollary \ref{JnnExpand}, we have
\begin{eqnarray*}
\det (S)&=&x^{n_1}\cdot\det\left(xI_{m_1}-\Gamma_{A(G_2)}(x)J_{m_1\times m_1}-\frac{1}{x}R^TR\right) \\
&=& x^{n_1}\cdot\left(1-\Gamma_{A(G_2)}(x)\cdot\Gamma_{\frac{1}{x}R^TR}(x)\right)\cdot\det\left(xI_{m_1}-\frac{1}{x}R^TR\right).
\end{eqnarray*}
By (\ref{RTRNN}) and (\ref{eq:GammaT}), we have
\begin{equation}\label{RTRG121G}
\Gamma_{\frac{1}{x}R^TR}(x)=\frac{m_1}{x-\frac{2r_1}{x}}=\frac{m_1x}{x^2-2r_1}.
\end{equation}
Note that \cite[Theorem 2.4.1]{kn:Cvetkovic10} the $A$-spectrum of $\mathcal {L}(G_1)$ are $\lambda_i(G_1)+r_1-2$ for $i=1,2,\ldots,n_1$, and $-2$ repeated $m_1-n_1$ times. Then by (\ref{RTRNN}) and (\ref{RTRG121G}), we have
\begin{eqnarray*}
\det (S)&=& x^{n_1}\cdot\left(1-\Gamma_{A(G_2)}(x)\cdot\frac{m_1x}{x^2-2r_1}\right)\cdot\prod_{i=1}^{m_1}\left(x-\frac{2}{x}-\frac{1}{x}\lambda_i(\mathcal{L}(G_1))\right) \\
&=& x^{m_1-n_1}\cdot\Big(x^2-2r_1-m_1x\cdot\Gamma_{A(G_2)}(x)\Big)\cdot\prod_{i=2}^{n_1}\Big(x^2-r_1-\lambda_i(G_1)\Big).
\end{eqnarray*}
Here the last step used the fact that $\lambda_1(G_1)=r_1$ since $G_1$ is an $r_1$-regular graph. Then the required result follows from $\phi\left(A(G_1\underline{\vee} G_2);x\right)=\phi(A(G_2))\cdot\det(S)$.
\qed
\end{proof}

Similar to Corollaries \ref{SVAComplete}, \ref{SVACor1} and \ref{cor:SVAcosp}, Theorem \ref{SEAthm1} implies the following results.

\begin{cor}\label{SEAComplete}
Let $G$ be an $r$-regular graph on $n$ vertices and $m$ edges with $m\ge n$. Then the $A$-spectrum of $G  \underline{\vee} K_{p,q}$ consists of:
\begin{itemize}
  \item[\rm (a)] $0$, repeated $m-n+p+q-2$ times;
  \item[\rm (b)] $\pm\sqrt{r+\lambda_i(G)}$, for $i=2,3,\ldots,n$;
  \item[\rm (c)] four roots of the equation $x^4-\big(pq+m(p+q)+2r\big)x^2-2mpqx+2pqr=0.$
\end{itemize}
\end{cor}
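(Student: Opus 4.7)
The plan is to apply Theorem \ref{SEAthm1} directly with $G_1=G$ and $G_2=K_{p,q}$, using the explicit description of the $A$-spectrum of $K_{p,q}$ together with the coronal formula in (\ref{CornCPL}). Recall that the eigenvalues of $A(K_{p,q})$ are $\pm\sqrt{pq}$ together with $0$ repeated $p+q-2$ times, so
\[
\phi(A(K_{p,q});x)=x^{\,p+q-2}\,(x^2-pq).
\]
Substituting this expression and $\Gamma_{A(K_{p,q})}(x)=((p+q)x+2pq)/(x^2-pq)$ into Theorem \ref{SEAthm1} gives
\[
\phi(A(G\underline{\vee}K_{p,q});x)=x^{\,p+q-2}(x^2-pq)\cdot x^{m-n}\cdot\Big(x^2-\frac{mx\big((p+q)x+2pq\big)}{x^2-pq}-2r\Big)\cdot\prod_{i=2}^{n}\big(x^2-r-\lambda_i(G)\big).
\]

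The main computational step is to clear the denominator inside the middle factor. Combining over the common denominator $x^2-pq$ turns that factor into
\[
\frac{(x^2-2r)(x^2-pq)-mx\big((p+q)x+2pq\big)}{x^2-pq}
=\frac{x^4-\big(pq+m(p+q)+2r\big)x^2-2mpq\,x+2pqr}{x^2-pq}.
\]
The numerator is precisely the quartic polynomial appearing in part (c), and the denominator $x^2-pq$ cancels the corresponding factor from $\phi(A(K_{p,q});x)$. This is the only non-routine arithmetic, and it is the step that forces the particular quartic in the statement.

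After cancellation the whole characteristic polynomial factors as
\[
\phi(A(G\underline{\vee}K_{p,q});x)=x^{\,m-n+p+q-2}\cdot\Big(x^4-\big(pq+m(p+q)+2r\big)x^2-2mpq\,x+2pqr\Big)\cdot\prod_{i=2}^{n}\big(x^2-r-\lambda_i(G)\big).
\]
Reading off the zeros of each factor yields the three families (a), (b), (c) in the statement: the $x^{m-n+p+q-2}$ factor supplies the eigenvalue $0$ with the claimed multiplicity (note that $m\ge n$ ensures this exponent is nonnegative), the products $\prod_{i=2}^{n}(x^2-r-\lambda_i(G))$ supply the pairs $\pm\sqrt{r+\lambda_i(G)}$, and the quartic gives the four roots in (c). No further estimate is needed; the proof is a one-shot substitution, with the algebraic cancellation between $\phi(A(K_{p,q}))$ and the denominator of $\Gamma_{A(K_{p,q})}(x)$ being the one point to check carefully.
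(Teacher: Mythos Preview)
Your proof is correct and follows exactly the approach indicated in the paper: substitute the known $A$-spectrum of $K_{p,q}$ and the coronal formula~(\ref{CornCPL}) into Theorem~\ref{SEAthm1}, then simplify. The paper itself omits the routine computation, and you have filled it in accurately, including the cancellation of the factor $x^2-pq$ between $\phi(A(K_{p,q});x)$ and the denominator of the coronal.
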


\begin{cor}\label{SEACor1}
\emph{\cite[Theorem 1.2]{kn:Indulal12}}
Let $G_1$ be an $r_1$-regular graphs on $n_1$ vertices and $m_1$ edges, and $G_2$ an $r_2$-regular graphs on $n_2$ vertices. Then the $A$-spectrum of $G_1\underline{\vee} G_2$ consists of:
\begin{itemize}
  \item[\rm (a)] $\lambda_i(G_2)$, for $i=2,3,\ldots,n_2$;
  \item[\rm (b)] $0$, repeated $m_1-n_1$;
  \item[\rm (c)] $\pm\sqrt{r_1+\lambda_j(G_1)}$, for $j=2,3,\ldots,n_1$;
  \item[\rm (d)] three roots of the equation $x^3-r_2x^2-(m_1n_2+2r_1)x+2r_1r_2=0.$
\end{itemize}
\end{cor}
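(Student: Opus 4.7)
The plan is to deduce this corollary directly from Theorem~\ref{SEAthm1} by substituting the explicit form of the coronal $\Gamma_{A(G_2)}(x)$ when $G_2$ is regular, and then reading off the roots of each factor of the characteristic polynomial.

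First, since $G_2$ is $r_2$-regular on $n_2$ vertices, formula (\ref{CornRRL}) gives
$\Gamma_{A(G_2)}(x) = n_2/(x-r_2)$. Substituting this into Theorem~\ref{SEAthm1} yields
\begin{eqnarray*}
\phi\bigl(A(G_1\underline{\vee}G_2);x\bigr) = \phi(A(G_2);x) \cdot x^{m_1-n_1} \cdot \Bigl(x^2 - \frac{m_1 n_2\, x}{x-r_2} - 2r_1\Bigr) \cdot \prod_{i=2}^{n_1}\bigl(x^2 - r_1 - \lambda_i(G_1)\bigr).
\end{eqnarray*}
The last product immediately contributes the eigenvalues in (c). The factor $x^{m_1-n_1}$ contributes the zero eigenvalue with multiplicity $m_1-n_1$, giving (b).

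Next I would handle the interaction between $\phi(A(G_2);x)$ and the bracket containing the rational coronal. Since $G_2$ is $r_2$-regular, $\lambda_1(G_2)=r_2$, so $\phi(A(G_2);x) = (x-r_2)\prod_{i=2}^{n_2}(x-\lambda_i(G_2))$. The trailing product contributes the eigenvalues $\lambda_i(G_2)$ for $i=2,\dots,n_2$, which is exactly (a). I would then absorb the factor $(x-r_2)$ into the bracket to clear the denominator:
\begin{eqnarray*}
(x-r_2)\Bigl(x^2 - \tfrac{m_1 n_2\, x}{x-r_2} - 2r_1\Bigr) = (x-r_2)(x^2-2r_1) - m_1 n_2 x = x^3 - r_2 x^2 - (m_1 n_2 + 2r_1)x + 2r_1 r_2.
\end{eqnarray*}
The three roots of this cubic are precisely those listed in (d).

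Putting the four contributions together recovers exactly the multiset of eigenvalues stated in (a)--(d), matching the total count $n_1 + m_1 + n_2$ of vertices of $G_1\underline{\vee}G_2$. No genuine obstacle is present here; the only point requiring a bit of care is the bookkeeping of multiplicities, in particular recognising that the $(x-r_2)$ factor of $\phi(A(G_2);x)$ must be combined with the coronal bracket rather than counted separately, so that part (a) runs from $i=2$ (not $i=1$) and the cubic in (d) correctly absorbs the $\lambda_1(G_2)=r_2$ eigenvalue together with the two roots coming from the bracket.
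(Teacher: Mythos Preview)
Your proof is correct and follows exactly the approach the paper indicates: substitute the regular-graph coronal formula \eqref{CornRRL} into Theorem~\ref{SEAthm1} and simplify, which the paper describes as routine and omits. Your bookkeeping of the $(x-r_2)$ factor being absorbed into the bracket to produce the cubic in (d) is the key step, and you have carried it out correctly.
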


\begin{cor}
\label{cor:SEAcosp}
\begin{itemize}
  \item[\rm (a)]  If $G_1$ and $G_2$ are $A$-cospectral regular graphs, and $H$ is any graph, then $G_1\underline{\vee} H$ and $G_2\underline{\vee} H$ are $A$-cospectral.
  \item[\rm (b)]  If $G$ is a regular graph, and $H_1$ and $H_2$ are $A$-cospectral graphs with $\Gamma_{A(H_1)}(x)=\Gamma_{A(H_2)}(x)$, then $G\underline{\vee} H_1$ and $G\underline{\vee} H_2$ are $A$-cospectral.
\end{itemize}
\end{cor}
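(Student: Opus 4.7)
The plan is to read off both statements directly from the product formula for $\phi(A(G_1\underline{\vee}G_2);x)$ proved in Theorem~\ref{SEAthm1}, using the fact that each factor on the right-hand side depends only on a small, explicit list of invariants of $G_1$ and $G_2$.

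For part~(a), I would first observe that if $G_1$ and $G_2$ are $A$-cospectral regular graphs, then they share every quantity that enters the first factor of the formula when we take that graph in the role of ``$G_1$'' of Theorem~\ref{SEAthm1}. Explicitly, cospectrality forces $n_1$ (the number of vertices, equal to the degree of $\phi(A(\cdot);x)$) to agree, and the regularity value $r_1$ is recovered as the largest eigenvalue $\lambda_1$ of a regular graph, so the two graphs have the same $r_1$ and therefore the same $m_1=n_1r_1/2$. The remaining eigenvalues $\lambda_i$ ($i\geq 2$) also coincide. Substituting into
\[
\phi(A(G_j\underline{\vee}H);x)=\phi(A(H);x)\cdot x^{m_1-n_1}\cdot\bigl(x^2-m_1x\,\Gamma_{A(H)}(x)-2r_1\bigr)\cdot\prod_{i=2}^{n_1}\bigl(x^2-r_1-\lambda_i(G_j)\bigr)
\]
for $j=1,2$, every ingredient matches, so the two characteristic polynomials are equal.

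For part~(b), I would hold $G$ fixed (playing the role of $G_1$) and notice that $H_1$ and $H_2$ enter the formula only through the factor $\phi(A(H);x)$ and through the coronal $\Gamma_{A(H)}(x)$ that appears inside the quadratic $x^2-m_1x\,\Gamma_{A(H)}(x)-2r_1$. By hypothesis both of these quantities agree for $H_1$ and $H_2$, so again the two characteristic polynomials coincide term by term.

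There is essentially no obstacle beyond the bookkeeping sketched above; the only thing to be careful about is the justification, in part~(a), that $A$-cospectral \emph{regular} graphs automatically share the common degree (hence the edge count $m_1$). This is the one place where regularity is genuinely used, and it is also the reason why, in contrast with part~(b), no extra assumption on coronals is needed in part~(a): for a regular graph the coronal is determined by $n$ and $r$ via the formula $\Gamma_{A(G)}(x)=n/(x-r)$ recalled in~(\ref{CornRRL}), so $A$-cospectrality of regular graphs already forces the coronals to agree. I would close the proof with the brief remark that this explains the asymmetry between the two clauses.
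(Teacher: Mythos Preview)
Your argument is correct and is precisely the paper's approach: the corollary is stated there without proof, as an immediate consequence of Theorem~\ref{SEAthm1}. One small inaccuracy in your closing remark: in part~(a) the coronals of $G_1$ and $G_2$ never enter the formula---only $\Gamma_{A(H)}(x)$ does, and it is trivially the same because $H$ is fixed---so the asymmetry with~(b) comes simply from the fact that in~(b) the second argument of the join varies, not from any special property of the coronal of a regular graph.
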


\subsection{$L$-spectra of subdivision-edge joins}
\label{sec:SEL}

\begin{thm}\label{SELthm1}
Let $G_1$ be an $r_1$-regular graph on $n_1$ vertices and $m_1$ edges, and $G_2$ an arbitrary graph on $n_2$ vertices. Then
\begin{eqnarray*}
\phi\left(L(G_1 \underline{\vee}  G_2);x\right)&=&x\cdot(x-2-n_2)^{m_1-n_1}\cdot\Big(x^2-(2+r_1+m_1+n_2)x+r_1n_2+r_1m_1+2m_1\Big)\\
&&\cdot\prod_{i=2}^{n_2}\Big(x-m_1-\mu_i(G_2)\Big)\cdot\prod_{i=2}^{n_1}\Big(x^2-(2+r_1+n_2)x+r_1n_2+\mu_i(G_1)\Big).
\end{eqnarray*}
\end{thm}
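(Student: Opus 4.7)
The plan is to mirror the block-matrix and Schur-complement strategy used in the proof of Theorem \ref{SVLthm1}, but adapted for the subdivision-edge join. First, I would work out the vertex degrees: a vertex of $V(G_1)$ has degree $r_1$, a vertex of $I(G_1)$ has degree $2+n_2$, and a vertex of $V(G_2)$ has degree $d_{G_2}(v)+m_1$. Ordering the vertices as $V(G_1)\cup I(G_1)\cup V(G_2)$ and letting $R$ denote the incidence matrix of $G_1$, this gives the block form
$$L(G_1\underline{\vee}G_2)=\bmat{r_1 I_{n_1} & -R & 0_{n_1\times n_2}\\[0.2cm] -R^T & (2+n_2)I_{m_1} & -J_{m_1\times n_2}\\[0.2cm] 0_{n_2\times n_1} & -J_{n_2\times m_1} & m_1 I_{n_2}+L(G_2)}.$$
I then take the Schur complement of $xI-L(G_1\underline{\vee}G_2)$ with respect to the block $(x-m_1)I_{n_2}-L(G_2)$; since $J_{m_1\times n_2}=\mathbf{1}_{m_1}\mathbf{1}_{n_2}^T$, the outer product produces $\Gamma_{L(G_2)}(x-m_1)\,J_{m_1\times m_1}$ by the definition of the coronal, yielding
$$\phi(L(G_1\underline{\vee}G_2);x)=\prod_{i=1}^{n_2}\big(x-m_1-\mu_i(G_2)\big)\cdot\det(S),$$
where
$$S=\pmat{(x-r_1)I_{n_1} & R\\[0.2cm] R^T & (x-2-n_2)I_{m_1}-\Gamma_{L(G_2)}(x-m_1)J_{m_1\times m_1}}.$$

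Next, I would take a second Schur complement of $S$ with respect to $(x-r_1)I_{n_1}$, reducing $\det(S)$ to $(x-r_1)^{n_1}\cdot\det(M-cJ_{m_1\times m_1})$ with $M=(x-2-n_2)I_{m_1}-\frac{1}{x-r_1}R^TR$ and $c=\Gamma_{L(G_2)}(x-m_1)=n_2/(x-m_1)$ by (\ref{eq:GammaTL}). The rank-one identity $\det(M-cJ)=\det(M)-c\,\mathbf{1}_{m_1}^T\adj(M)\mathbf{1}_{m_1}=\det(M)\big(1-c\,\mathbf{1}_{m_1}^TM^{-1}\mathbf{1}_{m_1}\big)$ splits the computation into two pieces. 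For $\det(M)$, I combine (\ref{RTRNN}) with the known $\mathcal{L}(G_1)$-spectrum recalled in the proof of Theorem \ref{SEAthm1} to get eigenvalues of $R^TR$ equal to $2r_1-\mu_i(G_1)$ for $i=1,\ldots,n_1$ together with $0$ of multiplicity $m_1-n_1$; this factors $\det(M)$ as
$$\frac{(x-2-n_2)^{m_1-n_1}}{(x-r_1)^{n_1}}\prod_{i=1}^{n_1}\big(x^2-(2+r_1+n_2)x+r_1 n_2+\mu_i(G_1)\big).$$
For the quadratic form, I would use the $r_1$-regularity of $G_1$ to note that $R^T\mathbf{1}_{n_1}=2\mathbf{1}_{m_1}$ and $R\mathbf{1}_{m_1}=r_1\mathbf{1}_{n_1}$, hence $R^TR\,\mathbf{1}_{m_1}=2r_1\mathbf{1}_{m_1}$; thus $\mathbf{1}_{m_1}$ is an eigenvector of $M$ and
$$\mathbf{1}_{m_1}^TM^{-1}\mathbf{1}_{m_1}=\frac{m_1(x-r_1)}{x^2-(2+r_1+n_2)x+r_1 n_2}.$$

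Finally, I would assemble the pieces and exploit $\mu_1(G_1)=\mu_1(G_2)=0$ for the cancellations: the $i=1$ factor $(x-m_1)$ from $\prod_{i=1}^{n_2}(x-m_1-\mu_i(G_2))$ cancels the $(x-m_1)$ in the denominator of $c$, while the $i=1$ factor $x^2-(2+r_1+n_2)x+r_1 n_2$ from $\det(M)$ gets absorbed into $1-c\,\mathbf{1}_{m_1}^TM^{-1}\mathbf{1}_{m_1}$. I expect the main obstacle to be the final polynomial bookkeeping: after clearing denominators, one must verify that the resulting cubic numerator factors exactly as $x\big(x^2-(2+r_1+m_1+n_2)x+r_1 n_2+r_1 m_1+2m_1\big)$, which requires a careful expansion but no additional idea. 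Putting everything together then gives precisely the claimed formula.
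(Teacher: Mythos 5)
Your proposal is correct and follows essentially the same route as the paper's proof: the same block form of $L(G_1\underline{\vee}G_2)$, the same two Schur complements, the rank-one determinant identity, and the spectrum of $R^TR$ via the line graph; your explicit eigenvector computation of $\mathbf{1}_{m_1}^TM^{-1}\mathbf{1}_{m_1}$ is just the paper's use of the coronal formula (\ref{eq:GammaT}) unpacked. The final cancellations and the factorization of the cubic work out exactly as you describe.
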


\begin{proof}
Let $R$ be the incidence matrix of $G_1$. Then the Laplacian matrix of $G_1\underline{\vee} G_2$ can be written as
\[L(G_1\underline{\vee} G_2)=\bmat{
                          r_1I_{n_1} & -R & 0_{n_1\times n_2} \\[0.2cm]
                          -R^T & (2+n_2)I_{m_1} & -J_{m_1\times n_2}\\[0.2cm]
                          0_{n_2\times n_1} & -J_{n_2\times m_1} & m_1I_{n_2}+L(G_2)
                       }.\]
Thus the Laplacian characteristic polynomial of $G_1\underline{\vee} G_2$ is given by
\begin{eqnarray*}
\phi\left(L(G_1\underline{\vee} G_2);x\right)
&=& \det\bmat{
                          (x-r_1)I_{n_1} & R & 0_{n_1\times n_2} \\[0.2cm]
                          R^T & (x-2-n_2)I_{m_1} & J_{m_1\times n_2}\\[0.2cm]
                          0_{n_2\times n_1} & J_{n_2\times m_1} & (x-m_1)I_{n_2}-L(G_2)
                       }\\ [0.2cm]
&=& \det\big((x-m_1)I_{n_2}-L(G_2)\big)\cdot\det(S)\\ [0.2cm]
&=& \det(S)\cdot\prod_{i=1}^{n_2}\Big(x-m_1-\mu_i(G_2)\Big),
\end{eqnarray*}
where
\begin{eqnarray*}
S&=&\pmat{
          (x-r_1)I_{n_1} & R  \\[0.2cm]
          R^T & (x-2-n_2)I_{m_1}-\Gamma_{L(G_2)}(x-m_1)J_{m_1\times m_1}
       }
\end{eqnarray*}
is the Schur complement of $(x-m_1)I_{n_2}-L(G_2)$ obtained by Proposition \ref{schur1212}.  By Proposition \ref{schur1212} and Corollary \ref{JnnExpand}, we have
\begin{eqnarray*}
\det (S)&=&(x-r_1)^{n_1}\cdot\det\left((x-2-n_2)I_{m_1}-\Gamma_{L(G_2)}(x-m_1)J_{m_1\times m_1}-\frac{1}{x-r_1}R^TR\right) \\
&=& (x-r_1)^{n_1}\cdot\det\left((x-2-n_2)I_{m_1}-\frac{1}{x-r_1}R^TR\right) \\
&&\cdot\left(1-\Gamma_{L(G_2)}(x-m_1)\Gamma_{\frac{1}{x-r_1}R^TR}(x-2-n_2)\right) .
\end{eqnarray*}
By (\ref{RTRNN}) and (\ref{eq:GammaT}), we have
\begin{equation}\label{RTRxr121}
   \Gamma_{\frac{1}{x-r_1}R^TR}(x-2-n_2)=\frac{m_1}{x-2-n_2-\frac{2r_1}{x-r_1}}=\frac{m_1(x-r_1)}{x^2-(2+r_1+n_2)x+n_2r_1}.
\end{equation}
Then by (\ref{RTRNN}),  (\ref{eq:GammaTL}) and  (\ref{RTRxr121}), we have
\begin{eqnarray*}
\det (S)
&=& (x-r_1)^{n_1}\cdot\prod_{i=1}^{m_1}\left(x-2-n_2-\frac{2}{x-r_1}-\frac{1}{x-r_1}\lambda_i(\mathcal{L}(G_1))\right) \\
&&\cdot\left(1- \frac{n_2}{x-m_1}\cdot\frac{m_1(x-r_1)}{x^2-(2+r_1+n_2)x+n_2r_1}\right)\\
&=& \frac{x}{x-m_1}\cdot(x-2-n_2)^{m_1-n_1}\cdot\prod_{i=2}^{n_1}\Big(x^2-(2+r_1+n_2)x+r_1n_2+\mu_i(G_1)\Big)\\
&& \cdot\Big(x^2-(2+r_1+m_1+n_2)x+r_1n_2+r_1m_1+2m_1\Big).
\end{eqnarray*}
Here in the last step we used the facts that $\mu_1(G_1)=0$ and $\lambda_i(G_1)=r_1-\mu_i(G_1)$ for $i=1,2,\ldots,n_1$, and the fact that the $A$-spectrum of $\mathcal {L}(G_1)$ are $\lambda_i(G_1)+r_1-2$ for $i=1,2,\ldots,n_1$, and $-2$ repeated $m_1-n_1$ times. Then the required result follows from $\phi\left(L(G_1\underline{\vee} G_2);x\right)=\det(S)\cdot\prod\limits_{i=1}^{n_2}\Big(x-m_1-\mu_i(G_2)\Big)$ and the fact that $\mu_1(G_2)=0$.
\qed
\end{proof}

Similar to Corollaries \ref{cor:Sptree}, \ref{cor:KirIn} and \ref{cor:SVLcosp}, Theorem \ref{SELthm1} implies the following results.

\begin{cor}\label{SESptree}
Let $G_1$ be an $r_1$-regular graph on $n_1$ vertices and $m_1$ edges, and $G_2$ an arbitrary graph on $n_2$ vertices. Then
\begin{eqnarray*}
t(G_1 \underline{\vee}  G_2)=\frac{(2+n_2)^{m_1-n_1}\cdot(r_1n_2+r_1m_1+2m_1)\cdot\prod_{i=2}^{n_2}\Big(m_1+\mu_i(G_2)\Big)\cdot\prod_{i=2}^{n_1}\Big(r_1n_2+\mu_i(G_1)\Big)}{m_1+n_1+n_2}.
\end{eqnarray*}
\end{cor}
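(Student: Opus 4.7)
The plan is to apply the standard matrix-tree identity
\begin{equation*}
t(G) = \frac{1}{n}\prod_{i=2}^{n}\mu_i(G),
\end{equation*}
valid for a connected graph $G$ on $n$ vertices with Laplacian eigenvalues $0=\mu_1<\mu_2\le\cdots\le\mu_n$, to the graph $G_1\underline{\vee}G_2$, which has $n = n_1+m_1+n_2$ vertices. Since Theorem \ref{SELthm1} already presents $\phi(L(G_1\underline{\vee}G_2);x)$ in completely factored form, the task reduces to isolating the single factor of $x$ that accounts for the obligatory zero Laplacian eigenvalue, and then reading off the product of the roots of the remaining factors.

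To carry this out, I would first verify the degree count: the factors on the right-hand side of Theorem \ref{SELthm1} contribute $1 + (m_1-n_1) + 2 + (n_2-1) + 2(n_1-1) = m_1+n_1+n_2$, matching the vertex count. Thus the lone factor $x$ must encode $\mu_1=0$, and the product of all non-zero Laplacian eigenvalues of $G_1\underline{\vee}G_2$ equals the product of the roots of the remaining monic factors. Applying Vieta's formulas factor by factor, $(x-(2+n_2))^{m_1-n_1}$ contributes $(2+n_2)^{m_1-n_1}$; the quadratic $x^2-(2+r_1+m_1+n_2)x+(r_1n_2+r_1m_1+2m_1)$ contributes its constant term $r_1n_2+r_1m_1+2m_1$; the linear product $\prod_{i=2}^{n_2}(x-m_1-\mu_i(G_2))$ contributes $\prod_{i=2}^{n_2}(m_1+\mu_i(G_2))$; and each quadratic $x^2-(2+r_1+n_2)x+r_1n_2+\mu_i(G_1)$ contributes $r_1n_2+\mu_i(G_1)$. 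Multiplying these together and dividing by $n_1+m_1+n_2$ yields the claimed expression.

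There is no genuine obstacle: the derivation is pure algebraic bookkeeping, extracting the non-zero roots of an already factored polynomial. The only implicit assumption is that $G_1\underline{\vee}G_2$ is connected so that the multiplicity of $\mu_1 = 0$ is exactly one; this holds whenever $G_1$ has at least one edge, because the inserted vertices in $I(G_1)$ are joined to every vertex of $V(G_2)$ and each such vertex in turn is adjacent in $\mathcal{S}(G_1)$ to vertices of $V(G_1)$, making the entire graph connected.
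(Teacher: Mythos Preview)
Your proposal is correct and follows essentially the same approach as the paper: the paper merely states that Corollary~\ref{SESptree} follows from Theorem~\ref{SELthm1} in the same way that Corollary~\ref{cor:Sptree} follows from Theorem~\ref{SVLthm1}, namely by inserting the non-zero Laplacian eigenvalues into the matrix-tree formula $t(G)=\frac{1}{n}\prod_{i=2}^{n}\mu_i(G)$. Your degree count, Vieta bookkeeping, and connectivity remark simply make explicit what the paper leaves to the reader.
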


\begin{cor}\label{SEKirIn}
Let $G_1$ be an $r_1$-regular graph on $n_1$ vertices and $m_1$ edges, and $G_2$ an arbitrary graph on $n_2$ vertices. Then
\begin{align*}
  Kf(G_1 \underline{\vee}  G_2) &= (m_1+n_1+n_2)\\
                                &\quad\times\left(\frac{m_1-n_1}{2+n_2}+\frac{2+r_1+m_1+n_2}{r_1n_2+r_1m_1+2m_1}+\sum_{i=2}^{n_2}\frac{1}{m_1+\mu_i(G_2)}+\sum_{i=2}^{n_1}\frac{2+r_1+n_2}{r_1n_2+\mu_i(G_1)}\right).
\end{align*}
\end{cor}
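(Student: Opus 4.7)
The plan is to apply the formula $Kf(G)=n\sum_{i=2}^{n}1/\mu_i(G)$ with $n=m_1+n_1+n_2$ (the total number of vertices of $G_1\underline{\vee}G_2$) to the factorisation of $\phi(L(G_1\underline{\vee}G_2);x)$ supplied by Theorem \ref{SELthm1}. Writing this out, the factor $x$ on the right-hand side accounts for the unique zero eigenvalue and is therefore discarded, while the remaining factors partition the nonzero Laplacian spectrum into four groups whose reciprocals we can sum separately.

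First I would read off the simple block $(x-2-n_2)^{m_1-n_1}$, which contributes the eigenvalue $2+n_2$ with multiplicity $m_1-n_1$ and thus adds $(m_1-n_1)/(2+n_2)$ to the reciprocal sum. Next, for the quadratic factor $x^2-(2+r_1+m_1+n_2)x+(r_1n_2+r_1m_1+2m_1)$, Vieta's formulas give $\alpha_1+\alpha_2=2+r_1+m_1+n_2$ and $\alpha_1\alpha_2=r_1n_2+r_1m_1+2m_1$, so $1/\alpha_1+1/\alpha_2=(2+r_1+m_1+n_2)/(r_1n_2+r_1m_1+2m_1)$. Third, the linear factors $x-m_1-\mu_i(G_2)$ for $i=2,\ldots,n_2$ contribute $\sum_{i=2}^{n_2}1/(m_1+\mu_i(G_2))$. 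Finally, for each $i=2,\ldots,n_1$, applying Vieta's formulas to $x^2-(2+r_1+n_2)x+(r_1n_2+\mu_i(G_1))$ yields a contribution of $(2+r_1+n_2)/(r_1n_2+\mu_i(G_1))$ to the reciprocal sum.

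Adding the four contributions and multiplying by $m_1+n_1+n_2$ gives exactly the expression in the corollary. There is essentially no obstacle beyond a small bookkeeping check: one must verify that none of the quadratics contributes an extra zero (otherwise the reciprocal sum would diverge), i.e.\ that the constant terms $r_1n_2+r_1m_1+2m_1$ and $r_1n_2+\mu_i(G_1)$ for $i\ge 2$ are strictly positive. The former is positive whenever $G_1\underline{\vee}G_2$ is nontrivial (so that $m_1\ge 1$), and the latter is positive under the standing assumption that $G_1\underline{\vee}G_2$ is connected (which forces $\mu_i(G_1)>0$ for $i\ge 2$ when $r_1=0$, and is automatic otherwise). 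With this verification the proof is immediate.
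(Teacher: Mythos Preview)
Your proposal is correct and follows exactly the approach implicit in the paper, which simply records that Corollary~\ref{SEKirIn} follows from Theorem~\ref{SELthm1} in the same way that Corollary~\ref{cor:KirIn} follows from Theorem~\ref{SVLthm1}: read off the nonzero Laplacian eigenvalues from the factorisation, use Vieta's relations on the quadratic factors to sum their reciprocals, and multiply by $|V(G_1\underline{\vee}G_2)|=m_1+n_1+n_2$. Your extra bookkeeping check that no additional zero eigenvalues occur is a welcome addition that the paper leaves tacit.
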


\begin{cor}
\label{cor:SELcosp}
\begin{itemize}
  \item[\rm (a)]  If $G_1$ and $G_2$ are $L$-cospectral regular graphs, and $H$ is an arbitrary graph, then $G_1\underline{\vee} H$ and $G_2\underline{\vee} H$ are $L$-cospectral.
  \item[\rm (b)]  If $G$ is a regular graph, and $H_1$ and $H_2$ are $L$-cospectral graphs, then $G\underline{\vee} H_1$ and $G\underline{\vee} H_2$ are $L$-cospectral.
  \item[\rm (c)]  If $G_1$ and $G_2$ are $L$-cospectral regular graphs, and $H_1$ and $H_2$ are $L$-cospectral graphs, then $G_1\underline{\vee} H_1$ and $G_2\underline{\vee} H_2$ are $L$-cospectral.
\end{itemize}
\end{cor}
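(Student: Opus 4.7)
The plan is to read off all three statements directly from the closed-form expression for $\phi(L(G_1 \underline{\vee} G_2); x)$ given by Theorem \ref{SELthm1}. Inspecting the right-hand side of that formula, one sees that it is completely determined by the following data: the order $n_1$, the size $m_1$, and the regularity $r_1$ of $G_1$; the order $n_2$ of $G_2$; the non-zero Laplacian eigenvalues $\mu_2(G_1),\ldots,\mu_{n_1}(G_1)$ of $G_1$; and the non-zero Laplacian eigenvalues $\mu_2(G_2),\ldots,\mu_{n_2}(G_2)$ of $G_2$. Consequently, once the invariance of each of these pieces under the hypothesis of cospectrality is established, all three parts follow instantly.

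For part (a), I would observe that if $G_1$ and $G_2$ are $L$-cospectral regular graphs, then they share the same number of vertices $n_1$ (the spectrum has that many entries), the same number of edges $m_1$ (since $2m_1=\sum_i \mu_i$), hence the same common degree $r_1=2m_1/n_1$, and by definition the same multiset of Laplacian eigenvalues. Thus every piece of data pertaining to the first factor in Theorem \ref{SELthm1} is preserved, and $\phi(L(G_1\underline{\vee}H);x)=\phi(L(G_2\underline{\vee}H);x)$ for an arbitrary graph $H$. For part (b), $L$-cospectral graphs $H_1$ and $H_2$ share the same $n_2$ and the same Laplacian eigenvalues, so the $G_2$-dependent factors in Theorem \ref{SELthm1} are identical, giving $\phi(L(G\underline{\vee}H_1);x)=\phi(L(G\underline{\vee}H_2);x)$. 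Part (c) follows by the transitivity of $L$-cospectrality applied twice, via $G_2\underline{\vee}H_1$ as an intermediate graph.

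There is no genuine obstacle; the only conceptual point worth recording is the contrast with the corresponding adjacency corollary \ref{cor:SEAcosp}(b), where one needs the extra hypothesis $\Gamma_{A(H_1)}(x)=\Gamma_{A(H_2)}(x)$. In the Laplacian setting this hypothesis becomes redundant, for by equation (\ref{eq:GammaTL}) one has $\Gamma_{L(H)}(x)=n_2/x$ for every graph $H$ on $n_2$ vertices, so the $L$-coronal depends only on the order and hence is automatically matched by any two $L$-cospectral graphs. This is precisely why Theorem \ref{SELthm1} does not feature $\Gamma_{L(G_2)}$ explicitly, and why the proof of the corollary reduces to a one-line inspection rather than a case analysis.
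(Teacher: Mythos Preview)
Your proposal is correct and matches the paper's approach: the paper states the corollary as an immediate consequence of Theorem~\ref{SELthm1} without giving any further argument, and your inspection of the formula, together with the observation that $L$-cospectral (regular) graphs share order, size, regularity, and Laplacian eigenvalues, is exactly what that implication amounts to. Your remark explaining why no coronal hypothesis is needed in the Laplacian case is a welcome clarification but not part of the paper's (nonexistent) proof.
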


\subsection{$Q$-spectra of subdivision-edge joins}
\label{sec:SEQ}

\begin{thm}\label{SEQthm1}
Let $G_1$ be an $r_1$-regular graph on $n_1$ vertices and $m_1$ edges, and $G_2$ an arbitrary graph on $n_2$ vertices. Then
\begin{eqnarray*}
\phi\left(Q(G_1 \underline{\vee}  G_2);x\right) &=& (x-2-n_2)^{m_1-n_1}\cdot\prod_{i=1}^{n_2}\Big(x-m_1-\nu_i(G_2)\Big)\\
&& \cdot\prod_{i=1}^{n_1-1}\Big(x^2-(2+r_1+n_2)x+r_1n_2+2r_1-\nu_i(G_1)\Big)\\
&&\cdot\Big(x^2-(2+r_1+n_2)x+r_1n_2-m_1(x-r_1)\cdot\Gamma_{Q(G_2)}(x-m_1)\Big).
\end{eqnarray*}
\end{thm}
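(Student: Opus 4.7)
The plan is to emulate the proof of Theorem~\ref{SELthm1} almost verbatim, swapping $L$ for $Q$ throughout and using $RR^T = Q(G_1)$ from (\ref{RRINie}) in place of the analogous $L$-identity. First I write the signless Laplacian in the same block form used there,
\[
Q(G_1\underline{\vee} G_2) = \bmat{
  r_1 I_{n_1} & R & 0_{n_1\times n_2} \\[0.2cm]
  R^T & (2+n_2) I_{m_1} & J_{m_1\times n_2} \\[0.2cm]
  0_{n_2\times n_1} & J_{n_2\times m_1} & m_1 I_{n_2} + Q(G_2)
},
\]
which differs from $L(G_1\underline{\vee} G_2)$ only in the signs on the off-diagonal blocks and the replacement $L(G_2)\mapsto Q(G_2)$.

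Then I would take the Schur complement against $(x-m_1)I_{n_2}-Q(G_2)$, producing the factor $\prod_{i=1}^{n_2}(x-m_1-\nu_i(G_2))$ together with
\[
S = \pmat{(x-r_1)I_{n_1} & -R \\[0.2cm] -R^T & (x-2-n_2)I_{m_1} - \Gamma_{Q(G_2)}(x-m_1)\,J_{m_1\times m_1}},
\]
exactly as in the $L$-case except that $\Gamma_{L(G_2)}(x-m_1)$ is replaced by $\Gamma_{Q(G_2)}(x-m_1)$. A second Schur complement against $(x-r_1)I_{n_1}$ then reduces $\det(S)$ to
\[
(x-r_1)^{n_1}\cdot\det\!\left((x-2-n_2)I_{m_1}-\Gamma_{Q(G_2)}(x-m_1)J_{m_1\times m_1}-\tfrac{1}{x-r_1}R^TR\right).
\]

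Third, I would apply the matrix determinant lemma to split off the rank-one perturbation, and use (\ref{RTRNN}) together with the known spectrum $\{\lambda_i(G_1)+r_1-2:\,1\le i\le n_1\}\cup\{-2\}^{m_1-n_1}$ of $\mathcal L(G_1)$ to factor the remaining determinant. Since $\mathcal L(G_1)$ is $(2r_1-2)$-regular, $R^TR$ has constant row-sum $2r_1$, so (\ref{eq:GammaT}) gives
\[
\Gamma_{\frac{1}{x-r_1}R^TR}(x-2-n_2)=\frac{m_1(x-r_1)}{(x-2-n_2)(x-r_1)-2r_1}.
\]

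The one delicate bookkeeping step — and what I expect to be the main obstacle — is the cancellation that produces the final $\prod_{i=1}^{n_1-1}$. Because $G_1$ is $r_1$-regular, $\nu_{n_1}(G_1)=2r_1$, so the product $\prod_{i=1}^{n_1}[(x-2-n_2)(x-r_1)-\nu_i(G_1)]$ contains the factor $(x-2-n_2)(x-r_1)-2r_1$, which is exactly the denominator of the coronal above. After this factor is pulled out and combined with the $1-\Gamma_{Q(G_2)}\cdot\Gamma_{R^TR/(x-r_1)}$ term, one obtains the quadratic $x^2-(2+r_1+n_2)x+r_1 n_2-m_1(x-r_1)\Gamma_{Q(G_2)}(x-m_1)$, while expanding $(x-2-n_2)(x-r_1)-\nu_i(G_1)=x^2-(2+r_1+n_2)x+r_1 n_2+2r_1-\nu_i(G_1)$ in the remaining $n_1-1$ factors gives the stated product. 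The power $(x-2-n_2)^{m_1-n_1}$ arises from the $m_1-n_1$ zero eigenvalues of $R^TR$, and all residual powers of $(x-r_1)$ cancel so that the final expression is polynomial in $x$ as required.
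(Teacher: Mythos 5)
Your proposal is correct and follows essentially the same route as the paper, which itself only states that the proof "is similar to that of Theorem \ref{SELthm1}": block form of $Q(G_1\underline{\vee}G_2)$, Schur complement against $(x-m_1)I_{n_2}-Q(G_2)$, a second reduction via $(x-r_1)I_{n_1}$, the determinant lemma with the coronal of $\tfrac{1}{x-r_1}R^TR$, and the line-graph spectrum from (\ref{RTRNN}). Your bookkeeping of the cancellation of $(x-2-n_2)(x-r_1)-2r_1$ against $\nu_{n_1}(G_1)=2r_1$ and of the powers of $(x-r_1)$ is exactly the computation the paper omits, and it checks out.
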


\begin{proof}
Let $R$ be the incidence matrix of $G_1$. Then the signless Laplacian matrix of $G_1\underline{\vee} G_2$ can be written as
\[Q(G_1\underline{\vee} G_2)=\bmat{
                          r_1I_{n_1} & R & 0_{n_1\times n_2} \\[0.2cm]
                          R^T & (2+n_2)I_{m_1} & J_{m_1\times n_2}\\[0.2cm]
                          0_{n_2\times n_1} & J_{n_2\times m_1} & m_1I_{n_2}+Q(G_2)
                       }.\]
The rest of the proof is similar to that of Theorem \ref{SELthm1} and hence we omit details.
\qed
\end{proof}

By applying (\ref{eq:GammaT}) again, Theorem \ref{SEQthm1} implies the following result.
\begin{cor}\label{SEQcor1}
Let $G_1$ be an $r_1$-regular graphs on $n_1$ vertices and $m_1$ edges, and $G_2$ an $r_2$-regular graphs on $n_2$ vertices. Then
\begin{eqnarray*}
\phi\left(Q(G_1\dot{\vee} G_2);x\right)&=&(x-2-n_2)^{m_1-n_1}\cdot\Big(x^3-ax^2+bx-2r_1r_2n_2\Big)\cdot\prod_{i=1}^{n_2-1}\Big(x-m_1-\nu_i(G_2)\Big)\\
&&\cdot\prod_{i=1}^{n_1-1}\Big(x^2-(2+r_1+n_2)x+r_1n_2+2r_1-\nu_i(G_1)\Big).
\end{eqnarray*}
where $a=2+2r_2+r_1+m_1+n_2$ and $b=r_1n_2+2m_1+r_1m_1+4r_2+2r_1r_2+2r_2n_2$.
\end{cor}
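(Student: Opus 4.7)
The corollary is a direct specialisation of Theorem \ref{SEQthm1} to the case when $G_2$ is also regular, so the plan is to instantiate the coronal $\Gamma_{Q(G_2)}(x-m_1)$ explicitly, match up a linear factor from the $Q(G_2)$-eigenvalue product with the rational piece in the final factor of Theorem \ref{SEQthm1}, and then simplify.

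First I would observe that since $G_2$ is $r_2$-regular, the signless Laplacian $Q(G_2)=D(G_2)+A(G_2)$ has every row sum equal to $2r_2$; hence by \eqref{eq:GammaT},
\[
\Gamma_{Q(G_2)}(x)=\frac{n_2}{x-2r_2}, \qquad \Gamma_{Q(G_2)}(x-m_1)=\frac{n_2}{x-m_1-2r_2}.
\]
Moreover, for an $r_2$-regular graph, $Q(G_2)=r_2 I_{n_2}+A(G_2)$, so its eigenvalues are $r_2+\lambda_i(G_2)$. In particular $2r_2$ is a $Q$-eigenvalue of $G_2$ (coming from $\lambda_1(G_2)=r_2$), which I may call $\nu_{n_2}(G_2)$. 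Thus
\[
\prod_{i=1}^{n_2}\bigl(x-m_1-\nu_i(G_2)\bigr)=(x-m_1-2r_2)\cdot\prod_{i=1}^{n_2-1}\bigl(x-m_1-\nu_i(G_2)\bigr).
\]

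Next I would absorb the factor $(x-m_1-2r_2)$ into the last bracket of Theorem \ref{SEQthm1}. Writing
\[
C(x):=(x-m_1-2r_2)\Bigl[x^2-(2+r_1+n_2)x+r_1n_2\Bigr]-m_1 n_2(x-r_1),
\]
the identity $\Gamma_{Q(G_2)}(x-m_1)=n_2/(x-m_1-2r_2)$ shows that this cubic $C(x)$ replaces the product of the two pieces, and Theorem \ref{SEQthm1} then takes exactly the form claimed, with the remaining regular factors $(x-2-n_2)^{m_1-n_1}$, $\prod_{i=1}^{n_1-1}(\cdots)$ and $\prod_{i=1}^{n_2-1}(\cdots)$ unchanged. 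It only remains to expand $C(x)=x^3-ax^2+bx-2r_1r_2n_2$ and read off $a$ and $b$: the $x^2$-coefficient is $-(2+r_1+n_2)-(m_1+2r_2)=-(2+2r_2+r_1+m_1+n_2)$, the constant term is $-r_1n_2(m_1+2r_2)+m_1n_2r_1=-2r_1r_2n_2$, and the $x$-coefficient collects as $r_1n_2+(m_1+2r_2)(2+r_1+n_2)-m_1n_2 = r_1n_2+2m_1+r_1m_1+4r_2+2r_1r_2+2r_2n_2$, matching the stated values of $a$ and $b$.

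The argument is essentially a bookkeeping calculation rather than a conceptual one; the only mild pitfall is making sure that the cancellation of the singularity of $\Gamma_{Q(G_2)}$ at $x-m_1=2r_2$ is handled correctly, which is exactly what the factoring-out of $(x-m_1-2r_2)$ from the $Q(G_2)$-product accomplishes. Once that is set up, the remaining work is a routine expansion of a linear-times-quadratic product, so the main (and minor) obstacle is simply arithmetic care when combining the terms to confirm the stated coefficients.
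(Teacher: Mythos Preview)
Your proposal is correct and follows exactly the approach the paper indicates: the paper's proof consists solely of the sentence ``By applying \eqref{eq:GammaT} again, Theorem~\ref{SEQthm1} implies the following result,'' and your argument is precisely the detailed execution of that hint, substituting $\Gamma_{Q(G_2)}(x-m_1)=n_2/(x-m_1-2r_2)$, absorbing the factor $(x-m_1-2r_2)=\bigl(x-m_1-\nu_{n_2}(G_2)\bigr)$ from the $Q(G_2)$-product, and expanding the resulting cubic. Your coefficient computations for $a$, $b$, and the constant term are all correct.
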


Finally, Theorem \ref{SEQthm1} enables us to construct infinitely many pairs of $Q$-cospectral graphs.

\begin{cor}
\label{cor:SEQcosp}
\begin{itemize}
  \item[\rm (a)]  If $G_1$ and $G_2$ are $Q$-cospectral regular graphs, and $H$ is a regular graph, then $G_1\underline{\vee} H$ and $G_2\underline{\vee} H$ are $Q$-cospectral.
  \item[\rm (b)]  If $G$ is a regular graph, and $H_1$ and $H_2$ are $Q$-cospectral graphs with $\Gamma_{Q(H_1)}(x)=\Gamma_{Q(H_2)}(x)$, then $G\underline{\vee} H_1$ and $G\underline{\vee} H_2$ are $Q$-cospectral.
\end{itemize}
\end{cor}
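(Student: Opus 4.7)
The plan is to read off both parts directly from the factorization of $\phi(Q(G_1\underline{\vee}G_2);x)$ given by Theorem \ref{SEQthm1}. Every ingredient appearing in that formula is built from a short list of invariants: from the first (regular) factor one needs $n_1$, $m_1$, $r_1$, and the multiset of signless Laplacian eigenvalues $\{\nu_i(G_1)\}$; from the second factor one needs $n_2$, $\{\nu_i(G_2)\}$, and the $Q$-coronal $\Gamma_{Q(G_2)}(x-m_1)$. Two choices of arguments that produce identical tuples of these data must therefore yield identical characteristic polynomials, and both claims reduce to checking that our hypotheses supply such matches.

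For part (a), let $G_1,G_2$ be $Q$-cospectral regular graphs and let $H$ take the role of the second argument. The shared $Q$-spectrum forces the same vertex count $n_1$ (by cardinality) and the same edge count $m_1$ (since $\sum_i\nu_i(G_j)=\mathrm{tr}(Q(G_j))=2m_1$); combined with regularity, this also forces the common value $r_1=2m_1/n_1$. Hence every quantity drawn from the first slot of Theorem \ref{SEQthm1} agrees when $G_1$ or $G_2$ is substituted, while the $H$-slot contributions $n_2,\{\nu_i(H)\},\Gamma_{Q(H)}(x-m_1)$ are literally the same in both cases. Applying the theorem twice gives $\phi(Q(G_1\underline{\vee}H);x)=\phi(Q(G_2\underline{\vee}H);x)$.

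For part (b), fix a regular graph $G$ and let $H_1,H_2$ be $Q$-cospectral with $\Gamma_{Q(H_1)}(x)=\Gamma_{Q(H_2)}(x)$. The first-slot data is identical across the two comparisons. In the second slot, $Q$-cospectrality supplies matching $n_2$ and multiset $\{\nu_i(H_j)\}$, while the coronal hypothesis makes the final factor $x^2-(2+r_1+n_2)x+r_1n_2-m_1(x-r_1)\Gamma_{Q(H_j)}(x-m_1)$ independent of the choice $j\in\{1,2\}$; all other factors involve only $n_2$ and the $\nu_i(H_j)$'s. Theorem \ref{SEQthm1} then delivers equal polynomials.

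I expect no substantive obstacle; both arguments are direct substitutions into the closed form already proved. The only remark worth including for transparency is that the coronal hypothesis in (b) is genuinely needed: as noted after Corollaries \ref{cor:SVAcosp} and \ref{cor:SVQcosp}, $Q$-cospectral graphs can have distinct $Q$-coronals, and in that situation the final quadratic factor would depend on $j$, breaking the equality of characteristic polynomials.
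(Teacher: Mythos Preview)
Your proposal is correct and matches the paper's own approach: the paper gives no explicit proof but simply presents the corollary as an immediate consequence of Theorem \ref{SEQthm1}, which is exactly what you do by checking that every ingredient in that factorization is pinned down by the stated hypotheses. Your added observations that $Q$-cospectrality forces equal $n_1$, $m_1$, and (via regularity) $r_1$ make the implicit deduction fully transparent.
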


\end{document}